\numberwithin{equation}{section}
\theoremstyle{plain}
\newtheorem{Thm}{Theorem}[section]
\newtheorem*{Thm*}{Theorem}
\newtheorem{Lem}[Thm]{Lemma}
\newtheorem{Prop}[Thm]{Proposition}
\theoremstyle{definition}
\newtheorem{Rem}[Thm]{Remark}
\newtheorem{?}[Thm]{Problem}
\newcommand{\ovl}{\overline}
\newcommand{\p}{\partial}
\newcommand{\R}{\mathbb{R}}
\newcommand{\e}{\varepsilon}
\newcommand{\ovlv}{\ovl{v}}
\newcommand{\ovlu}{\ovl{u}}
\newcommand{\ovlvl}{\ovl{v}_l}
\newcommand{\ovlvr}{\ovl{v}_r}
\newcommand{\ovlul}{\ovl{u}_l}
\newcommand{\ovlur}{\ovl{u}_r}
\newcommand{\X}{\mathcal{X}}
\newcommand{\Y}{\mathcal{Y}}
\newcommand{\vt}{\tilde{v}}
\newcommand{\ut}{\tilde{u}}
\newcommand{\Torus}{\mathbb{T}}
\newcommand{\vn}{\mathbf{v}}
\newcommand{\un}{\mathbf{u}}
\newcommand{\nn}{\mathbf{n}}
\newcommand{\Wn}{\mathbf{W}}
\newcommand{\wn}{\mathbf{w}}
\newcommand{\vtn}{\tilde{\mathbf{v}}}
\newcommand{\utn}{\tilde{\mathbf{u}}}
\newcommand{\ntn}{\tilde{\mathbf{n}}}
\newcommand{\fn}{\mathbf{f}}
\newcommand{\Fn}{\mathbf{F}}
\newcommand{\Hn}{\mathbf{H}}
\newcommand{\Qn}{\mathbf{Q}}
\newcommand{\norm}[1]{\lVert#1\rVert}
\newcommand{\abs}[1]{\left\lvert#1\right\rvert}
\date{}
\begin{document}
	
	\begin{titlepage}
		\title{Stability of large-amplitude viscous shock under periodic perturbation for 1-d isentropic Navier-Stokes equations} 
		
		\author{Feimin Huang $ ^{1, 2} $}
		
		\author{Qian Yuan $ ^{1,} $\thanks{Corresponding author. \\
				Feimin Huang is partially supported by NSFC Grant No. 11371349 and 11688101. Qian Yuan is supported by the China Postdoctoral Science Foundation funded projects 2019M660831 and 2020TQ0345.}
		}
		
		\affil{\footnotesize $ ^1 $ Academy of Mathematics and Systems Science, Chinese Academy of Sciences, Beijing 100190, China. \\
			E-mails: fhuang@amt.ac.cn (F. Huang), qyuan@amss.ac.cn (Q. Yuan)}
		
		\affil{\footnotesize $ ^2 $ School of Mathematical Sciences, University of Chinese Academy of Sciences, Beijing 100049, China. }
		
	\end{titlepage}
	
	\maketitle
	
	\begin{abstract} 
		The stability of solutions under periodic perturbations for both inviscid and viscous conservation laws is an interesting and important problem.
		In this paper, a large-amplitude viscous shock under space-periodic perturbation for the isentropic Navier-Stokes equations is considered. It is shown that if the initial perturbation around the shock is suitably small and satisfies a zero-mass type condition \cref{zero-mass}, then the solution of the N-S equations tends to the viscous shock with a shift, which is partially determined by the periodic oscillations. 
		In other words, the viscous shock is nonlinearly stable even though the perturbation oscillates at the far fields. 
		The key point is to construct a suitable ansatz $ (\vt,\ut) $, which carries the same oscillations of the solution $ (v,u) $ at the far fields, so that the difference $ (v-\vt,u-\ut) $ belongs to the $ H^2(\R) $ space for all $ t\geq 0. $
	\end{abstract}

	\tableofcontents


\section{Introduction}

The one-dimensional (1-d) compressible isentropic Navier-Stokes (N-S) equations in the Lagrangian coordinates read
\begin{equation}\label{N-S}
\begin{cases}
\p_t v - \p_x u = 0, & \quad \\
\p_t u + \p_x p(v) = \p_x \big( \frac{\mu(v)}{v} \p_x u \big), & \quad
\end{cases} x\in \R, t>0,
\end{equation}
where $ v>0 $ is the specific volume, $ u \in\R $ is the velocity, the pressure $ p(v) $ is assumed to be smooth and satisfy
\begin{equation}\label{pressure}
 p'(v)<0 \ \text{and } p''(v)>0 \quad \text{for all } v>0,  
\end{equation}
and the viscosity $ \mu(v)>0 $ is a smooth function. It is noted that the polytropic gas (i.e. $ p(v) = v^{-\gamma}, \gamma>1 $) is included.
For later use, we define the function
\begin{equation}\label{sigma}
\sigma(v) := \int_1^{v} \frac{\mu(s)}{s} ds.
\end{equation}
In this paper, we are concerned about a Cauchy problem for \cref{N-S} with the initial data
\begin{equation}\label{ic}
(v,u)(x,0) = (v_0,u_0)(x), \quad x\in\R,
\end{equation}
satisfying
\begin{equation}\label{end-behavior}
(v_0,u_0)(x) \rightarrow \begin{cases}
(\ovlvl,\ovlul)+\left(\phi_{0l}, \psi_{0l} \right)(x) & \quad \text{as } x\rightarrow -\infty,\\
(\ovlvr,\ovlur)+(\phi_{0r}, \psi_{0r})(x) & \quad \text{as } x\rightarrow +\infty,
\end{cases}
\end{equation}
where $ \ovlvl>0, \ovlvr>0, \ovlul $ and $ \ovlur $ are constants, $ (\phi_{0l}, \psi_{0l}) \in \R^2 $ and $ (\phi_{0r}, \psi_{0r}) \in \R^2 $ are periodic functions with periods $ \pi_l>0 $ and $ \pi_r>0, $ respectively. 

We assume that the constant states $ (\ovlvl,\ovlul) $ and $ (\ovlvr,\ovlur) $ generate a single 2-shock (the case for 1-shock is similar), i.e. there hold the Rankine-Hugoniot condition,
\begin{equation}\label{R-H}
	\begin{cases}
		-s\left(\ovlvr-\ovlvl\right) - \left(\ovlur-\ovlul\right) = 0, & \\
		-s\left(\ovlur-\ovlul\right)+\left(p(\ovlvr)-p(\ovlvl)\right) = 0, &
	\end{cases}
\end{equation}
and the Lax's entropy condition,
\begin{equation}\label{Lax-entropy}
	\ovlvl<\ovlvr, \quad \ovlul > \ovlur,
\end{equation} 
where $ s $ is the shock speed, given by
\begin{equation}\label{shock-speed}
	s = \sqrt{-\frac{p(\ovlvr)-p(\ovlvl)}{\ovlvr-\ovlvl}}>0.
\end{equation}
Besides, we assume that the periodic perturbations in \cref{end-behavior} have zero averages,
\begin{equation}\label{zero-ave}
\int_{0}^{\pi_l} (\phi_{0l}, \psi_{0l})(x) dx = 0 \quad \text{ and } \quad \int_{0}^{\pi_r} (\phi_{0r}, \psi_{0r})(x) dx =0. 
\end{equation}
It is noted that if \cref{zero-ave} does not hold, the problem \cref{N-S,end-behavior} turns to be concerned with another Riemann solution, which is not the topic of this paper.

\vspace{0.1cm}

A viscous shock profile $ (v^S,u^S)(\xi) = (v^S,u^S)(x-st) $ is a traveling wave solution to \cref{N-S}, which tends to the constant state $ (\ovlvl,\ovlul) $ (resp. $ (\ovlvr,\ovlur) $) as $ \xi \to -\infty $ (resp. $ +\infty $), and solves the following problem,
\begin{equation}\label{ode-1}
	\begin{cases}
		-s\left(v^S\right)'(\xi)- \left(u^S\right)'(\xi) = 0, \qquad\qquad\qquad\qquad\qquad \xi \in \R,& \\
		-s\left(u^S\right)'(\xi) + \big(p(v^S)\big)'(\xi)=\big(\sigma'\left(v^S\right) \big(u^S\big)' \big)'(\xi), \qquad \xi \in \R,& \\
		\left(v^S,u^S\right)(\xi) \to \left(\ovlvl,\ovlul\right) \ \left(\text{resp. } (\ovlvr,\ovlur)\right) \qquad \text{as } \xi \to -\infty \ (\text{resp. } +\infty).
	\end{cases}
\end{equation}

\begin{Lem}[\cite{He2019a}, Theorem 1]\label{Lem-cite}
	Under the condition \cref{pressure}, there exists a unique smooth solution $ (v^S,u^S) $ to the system \cref{ode-1} up to a shift. Moreover, it holds that $ (v^S)'>0 $ and $ (u^S)' < 0. $
\end{Lem}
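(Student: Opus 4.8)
The statement is the classical existence–uniqueness result for the isentropic viscous shock profile; I would obtain it by reducing \cref{ode-1} to an autonomous scalar first–order ODE and analysing it by elementary phase–line arguments.

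\emph{Reduction.} First I would record that any solution of \cref{ode-1} is bounded (it has finite limits and is continuous), and that the equations then force $(v^S)'$ and $(v^S)''$ to be bounded as well: integrating the second equation once gives $\sigma'(v^S)(u^S)' = -s u^S + p(v^S) + \text{const}$ with bounded right–hand side and $\sigma'(v^S)=\mu(v^S)/v^S$ bounded below, so $(u^S)'=-s(v^S)'$ is bounded, and then the equation for $\big(\sigma'(v^S)(v^S)'\big)'$ bounds $(v^S)''$. Hence $(v^S)'$ is uniformly continuous, and since $v^S$ has finite limits it follows that $(v^S)'\to0$, $(u^S)'\to0$ as $\xi\to\pm\infty$. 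Now integrating the first equation of \cref{ode-1} from $-\infty$ gives $u^S-\ovlul=-s(v^S-\ovlvl)$, so $(u^S)'=-s(v^S)'$; substituting into the second equation yields $s^2(v^S)'+\big(p(v^S)\big)'=-s\big(\sigma'(v^S)(v^S)'\big)'$, and one more integration from $-\infty$ produces
\[
s\,\sigma'(v^S)\,(v^S)' \;=\; -h(v^S),\qquad h(v):=s^2(v-\ovlvl)+p(v)-p(\ovlvl).
\]
Here $h(\ovlvl)=0$ trivially, while $h(\ovlvr)=0$ is exactly the content of \cref{R-H} and \cref{shock-speed}; the same relation \cref{R-H} makes the integration constant computed from the $+\infty$ limit agree, and makes $u^S:=\ovlul-s(v^S-\ovlvl)$ have limit $\ovlur$ at $+\infty$.

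\emph{Phase–line analysis.} By \cref{pressure}, $h''=p''>0$, so $h$ is strictly convex; having the two zeros $\ovlvl<\ovlvr$ it is strictly negative on $(\ovlvl,\ovlvr)$, and since the value $-s^2$ from \cref{shock-speed} is the secant slope of $p$ between $\ovlvl$ and $\ovlvr$, convexity gives $p'(\ovlvl)<-s^2<p'(\ovlvr)$, hence $h'(\ovlvl)<0<h'(\ovlvr)$. Because $\sigma'(v)=\mu(v)/v>0$ and $s>0$, the vector field $g(v):=-h(v)/\big(s\,\sigma'(v)\big)$ is smooth, strictly positive on $(\ovlvl,\ovlvr)$, vanishes at the endpoints, and satisfies $g'(\ovlvl)>0>g'(\ovlvr)$. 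Fixing $v_\ast\in(\ovlvl,\ovlvr)$, the solution of $(v^S)'=g(v^S)$ with $v^S(0)=v_\ast$ is strictly increasing, confined to the invariant interval $(\ovlvl,\ovlvr)$ and hence global, and by comparison with the linearisations at the two simple equilibria it tends exponentially to $\ovlvl$ as $\xi\to-\infty$ and to $\ovlvr$ as $\xi\to+\infty$. Setting $u^S:=\ovlul-s(v^S-\ovlvl)$ one verifies directly that $(v^S,u^S)$ solves \cref{ode-1}, with $(v^S)'=g(v^S)>0$ and $(u^S)'=-s(v^S)'<0$, which are the asserted monotonicities.

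\emph{Uniqueness and the main obstacle.} Conversely, by the reduction every solution of \cref{ode-1} satisfies the same autonomous equation $(v^S)'=g(v^S)$ with range $(\ovlvl,\ovlvr)$, and $u^S$ is then determined by $v^S$; two such profiles are strictly increasing diffeomorphisms of $\R$ onto $(\ovlvl,\ovlvr)$, hence translates of one another, which is precisely uniqueness up to a shift. The only step that requires genuine care is the first part of the reduction, namely justifying that the derivatives of an arbitrary heteroclinic orbit vanish at $\pm\infty$ so that the two integrations from $-\infty$ are legitimate; once this bootstrap is in place, everything else is routine ODE theory, the sign information in \cref{pressure} and \cref{Lax-entropy} (through $\ovlvl<\ovlvr$ and the convexity of $p$) being exactly what makes both equilibria hyperbolic and the connecting orbit monotone.
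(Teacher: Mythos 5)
The paper does not prove this lemma: it is imported verbatim from \cite{He2019a} (Theorem~1), so there is no internal proof to compare against. Your argument is the classical one — eliminate $u^S$ via the integrated mass equation $u^S=\ovlul-s(v^S-\ovlvl)$, integrate the momentum equation once to reach the scalar autonomous ODE $s\,\sigma'(v^S)(v^S)'=-h(v^S)$ with $h(v)=s^2(v-\ovlvl)+p(v)-p(\ovlvl)$, and run a phase-line analysis using $p''>0$ together with \cref{R-H} and \cref{shock-speed} — and it is correct, including the one step that is usually glossed over, namely the bootstrap showing $(u^S)'$ and $(v^S)''$ are bounded so that $(v^S)'\to0$ at $\pm\infty$ and the integrations from $-\infty$ pick up the right constants (here one should also say explicitly that $\inf_{\R}v^S>0$, which follows from continuity and the positive limits, so that $\sigma'(v^S)=\mu(v^S)/v^S$ is indeed bounded below). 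The uniqueness-up-to-shift and monotonicity conclusions then follow exactly as you say, since any heteroclinic of a scalar autonomous ODE between two consecutive simple zeros of the vector field is a translate of any other; this is the same route taken in \cite{He2019a} and in the classical literature on viscous shock layers, so your proposal faithfully reconstructs the delegated proof.
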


When the periodic perturbations vanish, the initial data $ (v_0,u_0) $
tends to constant states at the far fields. For this kind of  initial data, there have been various works about the global existence and asymptotic behavior of the solutions to the 1-d viscous conservation laws, including \cref{N-S}.
For the scalar equations, Il'in-Ole\v{\i}nik \cite{Oleinik1960} used the maximum principle to prove the $ L^\infty $-stability of constants, shocks and rarefaction waves in both inviscid and viscous cases. We also refer to Freist\"{u}hler-Serre \cite{Freistuhler1998} for the $ L^1 $-stability of scalar viscous shocks, which does not hold for systems as there appear diffusion waves propagating along other characteristic families; see \cite{Liu1985Mem}.
For the systems,  \cite{Matsumura1985,Goodman1986} proved the stability of weak viscous shocks for the Navier-Stokes equations and general parabolic systems, respectively, provided that the initial data satisfy the zero-mass condition. 
After that,  \cite{Liu1985Mem,Szepessy1993,Liu2015} removed the zero-mass condition by introducing diffusion waves to carry excessive masses and using point-wise estimates, which complete the nonlinear stability of weak viscous shocks for general systems of conservation laws under generic perturbations that decay at the far field. 
However, until now, the stability result of large-amplitude shock
	for general viscous conservations laws is still limited. In the works \cite{Zumbrun1998,MZ2004,HRZ2006} of K. Zumbrun, et al, it was shown that if viscous shocks are spectrally stable, then the nonlinear stability holds true. Then the following works \cite{HLyZ2009,HLaZ2010,BZ2016}
	verified the spectral stability of viscous shocks for the Navier-Stokes equations, with the aid of numeric analysis or high Mach numbers. Recently,
	with the aid of the effective velocity, \cite{He2019a} successfully used the elementary energy method to obtain the nonlinear stability of viscous shocks for the isentropic Navier-Stokes equations.   We also refer to \cite{Matsumura2010,Vasseur2016} for more interesting results on the stability of viscous shocks.

The stability of Riemann solutions under periodic perturbations for both inviscid and viscous conservation laws is an interesting and important problem. 
Lax \cite{Lax1957} and Glimm-Lax \cite{Glimm1970} were the first to study the periodic solutions to hyperbolic conservation laws. They showed that for the scalar equations and $ 2\times2 $ systems, the periodic solutions time-asymptotically decay to their constant averages. For the general Riemann initial data with periodic perturbations, recent papers \cite{Xin2019,XYY2019,YY2019} proved the asymptotic stability of shocks and rarefaction waves for the 1-d scalar conservation laws in both inviscid and viscous cases. And \cite{HY2020} used energy estimates to extend the stability result of the scalar rarefaction wave to the multi-dimensional case.

In this paper, we are concerned about the stability of a single shock with large amplitude under periodic perturbations for the N-S equations. It is shown that if the initial perturbation around the shock is suitably small and satisfies a zero-mass type condition \cref{zero-mass}, then the solution of the N-S equations tends to the viscous shock with a shift, which is partially determined by the periodic oscillations. The precise statement of the main result is given in \cref{Thm-shock} in the next section. 
A key point in the proof is to construct a suitable ansatz $ (\vt,\ut) $ by selecting two appropriate shift curves $ \X(t) $ and $ \Y(t). $ The ansatz carries the same oscillations of the solution $ (v,u) $ at the far fields, so that the difference $ (v-\vt,u-\ut) $ belongs to the $ H^2(\R) $ space and thus the anti-derivative method is available. The strategy for the a priori estimates is outlined as follows:  we first follow the idea of \cite{He2019a} to consider the equations of the volume and the effective velocity to obtain a basic energy estimate concerning the large-amplitude shock, and then we turn back to \cref{N-S} to achieve the desired estimate of higher-order derivatives. 

\vspace{0.3cm}

This paper is organized as follows. In \cref{Sec-ansatz}, the ansatz $ (\vt,\ut) $ is carefully constructed and then the main result, \cref{Thm-shock}, is stated. \cref{Thm-shock} is proved in \cref{Sec-thm} through energy estimates for the anti-derivatives of the perturbations, and some complicated and tedious calculations are moved to \cref{Sec-prop-shift} for easy reading.

\section{Ansatz and Main Result}\label{Sec-ansatz}

In the following of this paper, we denote $ \norm{\cdot} = \norm{\cdot}_{L^2(\R)} $ and $ \norm{\cdot}_k = \norm{\cdot}_{H^k(\R)} $ for $ k\geq 1. $ 

Let $\left( v_{l,r}, u_{l,r}\right)(x,t) $ be the unique periodic solution to \cref{N-S} with the periodic initial data 
\begin{equation*}
\left( v_{l,r}, u_{l,r}\right)(x,0) = \left(\ovl{v}_{l,r}, \ovl{u}_{l,r} \right) + \left(\phi_{0l,0r}, \psi_{0l,0r}\right)(x),
\end{equation*}
where $ \left(\phi_{0l,0r}, \psi_{0l,0r}\right) $ is given in \cref{end-behavior}.
We first give some properties of periodic solutions to \cref{N-S}.
\begin{Lem}\label{Lem-periodic}
	Assume that $ (v_0,u_0)(x)\in H^k(0,\pi) $ with $ k\geq 2 $ is periodic with period $ \pi>0 $ and average $ (\ovl{v},\ovl{u}). $ Then there exists $ \e_0>0 $ such that if 
	\begin{equation*}
		\e:=\norm{(v_0,u_0)-(\ovl{v},\ovl{u}) }_{H^k(0,\pi)} \leq \e_0,
	\end{equation*}
	the problem \cref{N-S} with initial data $ (v_0,u_0) $ admits a unique periodic solution $$ (v,u)(x,t) \in C(0,+\infty;H^k(0,\pi)), $$ which has same period and average as $ (v_0,u_0). $ Moreover, it holds that
	\begin{equation}\label{decay-per}
		\norm{(v,u)-(\ovl{v},\ovl{u})}_{H^k(0,\pi)}(t) \leq C \e e^{-\alpha t}, \quad t\geq 0,
	\end{equation}
	where the constants $ C>0 $ and $ \alpha>0 $ are independent of $ \e $ and $ t. $
\end{Lem}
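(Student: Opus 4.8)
The plan is to establish \cref{Lem-periodic} by combining the standard local existence theory for \cref{N-S} on a periodic domain with a global-in-time energy estimate that simultaneously yields the exponential decay \cref{decay-per}. Since the problem is posed on the torus $\Torus_\pi = \R/\pi\Z$, I would work throughout with the perturbation $(\phi,\psi) := (v-\ovl v, u - \ovl u)$, which by conservation of mass and momentum on the periodic cell retains zero average for all $t\ge0$: integrating the first equation of \cref{N-S} over $(0,\pi)$ gives $\frac{d}{dt}\int_0^\pi \phi\, dx = 0$, and integrating the second gives $\frac{d}{dt}\int_0^\pi \psi\, dx = 0$ because the flux terms $p(v)$ and $\frac{\mu(v)}{v}\p_x u$ are themselves periodic so their spatial integrals of derivatives vanish. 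The zero-average property is what makes the Poincar\'e inequality $\norm{f}_{L^2(0,\pi)} \le C_\pi \norm{\p_x f}_{L^2(0,\pi)}$ available, and this is the mechanism that upgrades mere boundedness of the dissipation into genuine exponential decay.

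The core is an a priori estimate. Rewriting \cref{N-S} in perturbation form, $\p_t \phi - \p_x \psi = 0$ and $\p_t \psi + \p_x(p(\ovl v + \phi) - p(\ovl v)) = \p_x\big(\tfrac{\mu(v)}{v}\p_x\psi\big)$, I would run the standard $H^k$ energy method. For the $L^2$ level, multiply the $\phi$-equation by $-p'(\ovl v)\phi$ (or better, use the physical energy: multiply the first equation by $-(p(v)-p(\ovl v))$ and the second by $\psi$ and add, exploiting $p'<0$ so that $-(p(v)-p(\ovl v))(v-\ovl v)$-type quantities are positive); after integration by parts the viscous term produces $\int \tfrac{\mu(v)}{v}(\p_x\psi)^2\,dx \gtrsim \norm{\p_x\psi}^2$. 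For the derivative levels $\p_x^j$, $j=1,\dots,k$, differentiate the equations, multiply by $\p_x^j\phi$ and $\p_x^j\psi$ appropriately, and control the commutator and nonlinear terms by the smallness of $\e$ and Sobolev embedding on the bounded interval. This produces, for $\mathcal{E}(t) := \norm{(\phi,\psi)(t)}_{H^k(0,\pi)}^2$, an inequality of the form $\frac{d}{dt}\mathcal{E} + c_0\norm{\p_x(\phi,\psi)}_{H^{k-1}}^2 \le C\e\,(\text{dissipation})$; absorbing the right side for $\e$ small gives $\frac{d}{dt}\mathcal{E} + c_1\norm{\p_x(\phi,\psi)}_{H^{k-1}(0,\pi)}^2 \le 0$. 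Now the crucial point: because $(\phi,\psi)$ has zero average, Poincar\'e gives $\norm{(\phi,\psi)}_{H^k}^2 \le C\norm{\p_x(\phi,\psi)}_{H^{k-1}}^2$, hence $\frac{d}{dt}\mathcal{E} + \alpha \mathcal{E} \le 0$ with $\alpha = c_1/C$, and Gr\"onwall yields $\mathcal{E}(t) \le \mathcal{E}(0) e^{-\alpha t}$, i.e. \cref{decay-per} with $C$ and $\alpha$ independent of $\e$ and $t$. Combining this uniform-in-time bound with the local existence theorem via the usual continuation argument (choosing $\e_0$ so the a priori smallness is self-improving) gives the global periodic solution in $C(0,\infty;H^k(0,\pi))$; uniqueness follows from a standard energy estimate on the difference of two solutions, and the periodicity and constancy of the average are preserved by the PDE as noted.

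The main obstacle I anticipate is not conceptual but bookkeeping: closing the higher-order energy estimates requires careful treatment of the quasilinear viscous coefficient $\frac{\mu(v)}{v}$, whose derivatives bring in top-order terms that must be shown to be either of lower order or absorbable. The cleanest route, following \cite{He2019a}, is to pass to the effective velocity formulation — set $w := u - \sigma(v)_x = u - \p_x\sigma(v)$ (using $\sigma$ from \cref{sigma}), in which the second equation becomes $\p_t w + \p_x p(v) = 0$-type with the dissipation relocated into the $v$-equation as a genuine parabolic term $\p_t v = \p_x u = \p_x w + \p_x^2\sigma(v)$; this diagonalizes the dissipative structure and makes the $H^k$ estimates essentially those for a scalar parabolic equation coupled to a transport equation, at which point the energy argument above goes through transparently. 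One should also verify that for $\e_0$ small the solution stays in the region $v>0$ where $p$ and $\mu$ are smooth, which is immediate from the $H^k \hookrightarrow C^0$ embedding on $(0,\pi)$ and the smallness of $\norm{\phi}_{H^k}$. I will not carry out these routine computations here.
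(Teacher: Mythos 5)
Your overall strategy --- pass to the zero-average perturbation $(\phi,\psi)=(v-\ovl v,u-\ovl u)$, note that the averages are conserved, run $H^k$ energy estimates, and convert dissipation into exponential decay via the Poincar\'e inequality --- is exactly the paper's (the proof is in the appendix). But there is one concrete gap in the middle: the claimed differential inequality $\frac{d}{dt}\mathcal E + c_1\norm{\p_x(\phi,\psi)}_{H^{k-1}}^2\le 0$, with dissipation in \emph{both} components, does not follow from the estimates you describe. The mass equation $\p_t\phi=\p_x\psi$ is a pure transport equation, and the symmetric multipliers you propose (the physical energy at the $L^2$ level, $\p_x^j\phi$ and $\p_x^j\psi$ at higher levels) produce only the viscous dissipation $\norm{\p_x\psi}_{H^{k-1}}^2$; they give no control of $\norm{\p_x\phi}^2$ whatsoever. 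Poincar\'e then yields decay of $\psi$ but nothing for $\phi$. The effective-velocity reformulation you invoke at the end does not repair this: it merely swaps which component is parabolic, leaving $\p_t w=-\p_x p(v)$ as an undamped transport equation, so "a scalar parabolic equation coupled to a transport equation" is precisely the situation in which the energy argument does \emph{not} go through transparently --- a transport equation coupled to a heat equation has no a priori decay unless the coupling is exploited.

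The missing ingredient is the Kanel'/Matsumura--Nishihara cross-term estimate, which is the heart of the paper's proof (inequality \cref{app-ineq-2}). Using $\p_x\big(\sigma'(v)\p_x u\big)=\p_x\big(\sigma'(v)\p_t v\big)=\p_t\big(\sigma'(v)\p_x\phi\big)$, rewrite the momentum equation as $\p_t\psi+\p_x p(v)=\p_t\big(\sigma'(v)\p_x\phi\big)$ and multiply by $\sigma'(v)\p_x\phi$. Since $p'<0$, this produces the good term $\int\sigma'(v)\abs{p'(v)}(\p_x\phi)^2\,dx$, at the cost of a multiple of $\norm{\p_x\psi}^2$ (after moving $\p_t\psi\,\sigma'(v)\p_x\phi$ into a time derivative and integrating by parts). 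Adding a large multiple $M_2$ of the basic energy estimate absorbs that cost, and only then does one have dissipation in both $\p_x\phi$ and $\p_x\psi$, so that Poincar\'e (applicable because both $\phi$ and $\psi$ have zero average) closes the Gr\"onwall argument. With this one estimate inserted, the rest of your plan (higher derivatives, continuation, uniqueness, positivity of $v$) matches the paper and is routine.
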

\cref{Lem-periodic} can be proved by standard energy estimates with the Poincar\'{e} inequality. And the proof is left in the appendix. 

For a viscous shock profile $ \left(v^S,u^S\right)(x-st) $ solving \cref{ode-1}, let
\begin{equation}\label{g}
g(x) := \frac{v^S(x)-\ovlvl}{\ovlvr-\ovlvl}= \frac{u^S(x)-\ovlul}{\ovlur-\ovlul},
\end{equation}
where the equality can follow easily from \cref{R-H} and \cref{ode-1}.
It is straightforward to check from \cref{Lem-cite} that $ 0<g(x)<1 $ and $ g'(x) >0 $ for all $ x \in\R. $ 

For any function $ h(x) $ and curve $ \xi(t), $ we denote $ h_\xi $ to be the shifted function $$ h_\xi(x) := h(x-\xi(t)) \quad \text{with the derivatives } ~ h^{(k)}_\xi(x) := h^{(k)}(x-\xi(t)), \quad k\geq 1. $$ 

\textbf{Ansatz: } 
We look for our ansatz as in the form,
\begin{equation}\label{ansatz-shock}
\begin{aligned}
\vt(x,t) & := v_l(x,t) \left(1-g_{st+\X(t)}(x)\right) + v_r(x,t) g_{st+\X(t)}(x), \\
\ut(x,t) & := u_l(x,t) \left(1-g_{st+\Y(t)}(x)\right) + u_r(x,t) g_{st+\Y(t)}(x),
\end{aligned}
\end{equation}
where $ \X(t) $ and $ \Y(t) $ are two $ C^1 $ curves on $ [0,+\infty) $ to be determined. 
Note that $ (\vt,\ut) $ approaches the periodic solution $ (v_l,u_l) $ (resp. $ (v_r,u_r) $) as $ x\to -\infty $ (resp. $ +\infty $), thus we expect that the ansatz $ (\vt,\ut) $ carries same oscillations with the solution $ (v,u) $ at the far fields, i.e. $ (v-\vt,u-\ut)(x,t) \to 0 $ as $ \abs{x}\to +\infty $ for all $ t\geq 0. $ 

By plugging the ansatz $ (\vt,\ut) $ into \cref{N-S} with direct calculations, one can obtain that
\begin{equation}\label{eq-ansatz}
\begin{cases}
\p_t \vt - \p_x \ut = -\p_x F_1 - f_2, & \\
\p_t \ut + \p_x p(\vt) - \p_x\left(\sigma'(\vt) \p_x \ut \right) = -\p_x F_3 - f_4, &
\end{cases}
\end{equation}
where
\begin{equation}\label{source}
\begin{aligned}
F_1 =~ & (u_r-u_l)(g_{st+\Y}-g_{st+\X}), \\
f_2 =~ & (u_r-u_l)g'_{st+\X}+ (v_r-v_l) g'_{st+\X} (s+\X'),\\
F_3 =~ & - \left(p(\vt)- p(v_l)\right)(1-g_{st+\Y}) - \left(p(\vt)- p(v_r)\right) g_{st+\Y}  + \sigma'(\vt) (u_r-u_l) g'_{st+\Y}  \\
& +  \left(\sigma'(\vt)-\sigma'(v_l)\right)\p_x u_l (1-g_{st+\Y}) + \left( \sigma'(\vt) - \sigma'(v_r)\right)\p_x u_r g_{st+\Y}, \\
f_4 =~ & (u_r-u_l) g'_{st+\Y} (s+\Y') + \left(\sigma'(v_r) \p_x u_r - \sigma'(v_l)\p_x u_l \right) g'_{st+\Y} \\
& - \left(p(v_r) - p(v_l) \right) g'_{st+\Y}.
\end{aligned}
\end{equation}
It is noted that $ \vt-v_l = \left(v_r-v_l\right) g_{st+\X} $ and $ \vt-v_r = - \left(v_r-v_l\right) \left(1- g_{st+\X}\right), $ thus both $ F_1(x,t) $ and $ F_3(x,t) $ vanish as $ \lvert x\rvert \rightarrow +\infty $ for any $ t\geq 0. $ 

To make the anti-derivative method available, we aim to find appropriate shift curves $ \X(t) $ and $ \Y(t) $ so that 
\begin{equation}\label{zero-mass-2}
	\int_\R (v-\vt,u-\ut)(x,t) dx = 0 \quad \text{for all } t\geq 0.
\end{equation}
To achieve this, it follows from \cref{N-S,eq-ansatz} that
\begin{align*}
	\frac{d}{dt} \int_\R (v-\vt,u-\ut)(x,t) dx = \int_\R (f_2,f_4)(x,t)dx,
\end{align*}
which implies \cref{zero-mass-2}, if there hold
\begin{equation}\label{source-zero}
\int_\R  (f_2, f_4)(x,t) dx = 0, \quad t > 0,
\end{equation}
and 
\begin{equation}\label{zero-mass-ic}
	\int_\R \big( v_0(x)-\vt(x,0), u_0(x) - \ut(x,0) \big) dx = 0.
\end{equation}
From \cref{source-zero}, one has that
\begin{equation}\label{ode-shift}
\begin{aligned}
\X'(t) & =  - s- \dfrac{\int_\R (u_r-u_l)(x,t)g'_{st+\X}(x) dx}{\int_\R (v_r-v_l)(x,t) g'_{st+\X}(x) dx}, \\
\Y'(t) & =  -s+ \dfrac{\int_\R \left[ p(v_r)-p(v_l) - \sigma'(v_r) \p_x u_r + \sigma'(v_l) \p_x u_l \right](x,t) g'_{st+\Y}(x) dx}{\int_\R  \left(u_r-u_l\right)(x,t) g'_{st+\Y}(x) dx}.
\end{aligned}
\end{equation} 
From \cref{zero-mass-ic}, the initial data $ \X(0) = \X_0 $ and $ \Y(0) = \Y_0 $ can be uniquely determined.  Indeed, it follows from the first component of \cref{zero-mass-ic} that
\begin{align*}
	0 = & \int_\R \left(v_0- v^S_{\X_0} - \phi_{0l} (1-g_{\X_0})- \phi_{0r}g_{\X_0} \right)(x) dx \\
	= & \int_\R \left(v^S-v^S_{\X_0}\right)(x) dx + \int_{-\infty}^{0} (v_0-v^S-\phi_{0l})(x) dx + \int_{-\infty}^{0} \left(\phi_{0l}-\phi_{0r}\right)(x) g_{\X_0}(x) dx \\
	& + \int_{0}^{+\infty} \left(v_0-v^S-\phi_{0r}\right)(x) dx - \int_{0}^{+\infty} \left(\phi_{0l}-\phi_{0r}\right)(x) \left(1-g_{\X_0}(x)\right) dx,
\end{align*}
which yields that
\begin{equation}\label{eq-1}
	\mathcal{A}_1(\X_0) + \frac{1}{\ovlvr-\ovlvl} \Big\{  \int_{-\infty}^{0} \left(v_0-v^S-\phi_{0l}\right) dx + \int_{0}^{+\infty} \left(v_0-v^S-\phi_{0r}\right) dx \Big\} = 0,
\end{equation}
where
\begin{equation}\label{A1}
	\mathcal{A}_1(\X_0) := \X_0 + \frac{1}{\ovlvr-\ovlvl} \Big\{ \int_{-\infty}^{0} (\phi_{0l}-\phi_{0r}) g_{\X_0} dx - \int_{0}^{+\infty} (\phi_{0l}-\phi_{0r}) (1-g_{\X_0}) dx \Big\}.
\end{equation}
Similarly, one can get from the second component of \cref{zero-mass-ic} that $ \Y_0 $ satisfies
\begin{equation}\label{eq-2}
	\mathcal{A}_2(\Y_0) + \frac{1}{\ovlur-\ovlul} \Big\{ \int_{-\infty}^{0} \left(u_0-u^S-\psi_{0l}\right) dx + \int_{0}^{+\infty} \left(u_0-u^S-\psi_{0r}\right) dx\Big\} = 0, 
\end{equation}
where
\begin{equation}\label{A2}
	\mathcal{A}_2(\Y_0) := \Y_0 + \frac{1}{\ovlur-\ovlul} \Big\{ \int_{-\infty}^{0} (\psi_{0l}-\psi_{0r}) g_{\Y_0} dx - \int_{0}^{+\infty} (\psi_{0l}-\psi_{0r}) (1-g_{\Y_0}) dx \Big\}.
\end{equation}
Note that $ g'>0, $ then it follows from \cref{A1,A2} that
\begin{align*}
	\abs{\mathcal{A}_1'(\X_0) - 1} \leq \frac{\norm{\phi_{0l},\phi_{0r}}_{L^\infty(\R)}}{\abs{\ovlvr-\ovlvl}} \norm{g'}_{L^1(\R)} =  \frac{\norm{\phi_{0l},\phi_{0r}}_{L^\infty(\R)}}{\abs{\ovlvr-\ovlvl}},
\end{align*}	
and
\begin{align*}
	\abs{\mathcal{A}_2'(\Y_0) - 1} \leq   \frac{\norm{\psi_{0l},\psi_{0r}}_{L^\infty(\R)}}{\abs{\ovlur-\ovlul}}.
\end{align*}
Hence, if $ \norm{\phi_{0l}, \phi_{0r}, \psi_{0l}, \psi_{0r}}_{L^\infty(\R)} $ is small, one can get from implicit theorem that there exists a unique $ (\X_0, \Y_0) $ such that \cref{eq-1,eq-2} hold true.
Once the initial data $ (\X_0, \Y_0) $ is determined, one can solve the ODE system \cref{ode-shift}.

\begin{Lem}\label{Lem-shift}
	Assume that the periodic perturbations $ \phi_{0l},\phi_{0r}, \psi_{0l} $ and $ \psi_{0r} $ satisfy \cref{zero-ave}. Then there exists $ \e_0>0 $ such that if 
	\begin{equation*}
	\e := \norm{\phi_{0l}, \psi_{0l}}_{H^2(0,\pi_l)} + \norm{\phi_{0r}, \psi_{0r}}_{H^2(0,\pi_r)} \leq \e_0,
	\end{equation*}
	there exists a unique solution $ (\X, \Y)(t) \in C^1[0,+\infty) $ to \cref{ode-shift} with the initial data $  (\X_0, \Y_0). $ Moreover, the solution satisfies that
	\begin{equation*}
	\begin{aligned}
	& \abs{\X'(t)} + \abs{\X(t)-\X_\infty} \leq C\e e^{-\alpha t}, \\
	& \abs{\Y'(t)} + \abs{\Y(t)-\Y_\infty} \leq C \e e^{-\alpha t}, 
	\end{aligned} \qquad t\geq 0,
	\end{equation*}	
	where the constants $ C>0 $ and $ \alpha>0 $ are independent of $ \e $ and $ t, $ and the shifts $ \X_\infty $ and $ \Y_\infty $ are given by $ \X_\infty = \mathcal{A}_1(\X_0) + \mathcal{C}_1 $ and $ \Y_\infty = \mathcal{A}_2(\Y_0) + \mathcal{C}_2, $ with 
	\begin{equation}\label{X-inf}
	\begin{aligned}
	\mathcal{C}_1 & :=  \frac{1}{\ovlvr-\ovlvl} \Big\{ \frac{1}{\pi_l} \int_{0}^{\pi_l} \int_{0}^{x} \phi_{0l}(y)dy dx - \frac{1}{\pi_r} \int_{0}^{\pi_r} \int_{0}^{x} \phi_{0r}(y)dy dx \Big\},
	\end{aligned}
	\end{equation}
	and
	\begin{equation}\label{Y-inf}
	\begin{aligned} 
	\mathcal{C}_2 & := \frac{1}{\ovlur-\ovlul} \Big\{ \frac{1}{\pi_l} \int_{0}^{\pi_l} \int_{0}^{x} \psi_{0l}(y)dydx - \int_{0}^{+\infty} \frac{1}{\pi_l} \int_{0}^{\pi_l} \left[p(v_l(x,t))-p(\ovlvl)\right] dx dt \\ 
	& \quad -\frac{1}{\pi_r} \int_{0}^{\pi_r} \int_{0}^{x} \psi_{0r}(y)dydx  + \int_{0}^{+\infty} \frac{1}{\pi_r} \int_{0}^{\pi_r} \left[p(v_r(x,t))-p(\ovlvr)\right] dx dt \\
	& \quad +\sigma(\ovlvl) - \sigma(\ovlvr) - \frac{1}{\pi_l} \int_{0}^{\pi_l} \sigma\left(\ovlvl+\phi_{0l}(x) \right) dx + \frac{1}{\pi_r} \int_{0}^{\pi_r} \sigma\left(\ovlvr+\phi_{0r}(x) \right) dx \Big\}.
	\end{aligned}
	\end{equation}
\end{Lem}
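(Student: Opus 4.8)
The two equations in \cref{ode-shift} are decoupled scalar ODEs, $\X'(t)=\mathcal{F}_1(t,\X(t))$ and $\Y'(t)=\mathcal{F}_2(t,\Y(t))$, so each may be treated separately. By \cref{Lem-periodic} (applied with $k=2$ on $(0,\pi_l)$ and on $(0,\pi_r)$) and the embedding $H^1\hookrightarrow L^\infty$ together with periodicity, one has $\norm{(v_{l,r},u_{l,r})(\cdot,t)-(\ovl{v}_{l,r},\ovl{u}_{l,r})}_{L^\infty(\R)}+\norm{\p_x u_{l,r}(\cdot,t)}_{L^\infty(\R)}\le C\e e^{-\alpha t}$; in particular $v_{l,r}(\cdot,t)$ stays in a fixed compact subset of $(0,\infty)$ on which $p,\sigma'$ are smooth and $\sigma'>0$. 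Since $g(-\infty)=0$ and $g(+\infty)=1$ give $\norm{g'}_{L^1(\R)}=1$, the substitution $\xi=x-st-\X(t)$ shows, \emph{uniformly in the value of $\X(t)$}, that $\abs{\int_\R(v_r-v_l)(x,t)\,g'_{st+\X}(x)\,dx-(\ovl{v}_r-\ovl{v}_l)}\le C\e e^{-\alpha t}$, and likewise for the other three integrals occurring in \cref{ode-shift}, with limiting values $\ovl{u}_r-\ovl{u}_l$ and $p(\ovl{v}_r)-p(\ovl{v}_l)$. By \cref{Lax-entropy} the two denominators in \cref{ode-shift} then keep a fixed sign with absolute value $\ge\tfrac12\abs{\ovl{v}_r-\ovl{v}_l}$, respectively $\ge\tfrac12\abs{\ovl{u}_l-\ovl{u}_r}$, once $\e$ is small, so $\mathcal{F}_1$ and $\mathcal{F}_2$ are continuous in $t$ (using $(v_{l,r},u_{l,r})\in C([0,\infty);H^2)$) and globally Lipschitz in the shift variable, uniformly in $t$. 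Picard--Lindel\"{o}f then yields a unique $C^1$ solution with data $(\X_0,\Y_0)$, defined on all of $[0,\infty)$.

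Substituting these bounds back into \cref{ode-shift} and using \cref{R-H} in the forms $(\ovl{u}_r-\ovl{u}_l)/(\ovl{v}_r-\ovl{v}_l)=-s$ and $(p(\ovl{v}_r)-p(\ovl{v}_l))/(\ovl{u}_r-\ovl{u}_l)=s$, the $O(1)$ part of each right-hand side cancels the explicit $-s$ exactly, leaving
\[
\abs{\X'(t)}+\abs{\Y'(t)}\le C\e e^{-\alpha t},\qquad t\ge0.
\]
Hence $\X',\Y'\in L^1(0,\infty)$, the limits $\X_\infty:=\X_0+\int_0^\infty\X'(\tau)\,d\tau$ and $\Y_\infty:=\Y_0+\int_0^\infty\Y'(\tau)\,d\tau$ exist, and $\abs{\X(t)-\X_\infty}+\abs{\Y(t)-\Y_\infty}\le C\e e^{-\alpha t}$. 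This proves everything except the explicit formulas for $\X_\infty$ and $\Y_\infty$.

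To identify the shifts I would compare the ansatz with a ``frozen'' version of itself. For $\X_\infty$, set $\vt_\infty(x,t):=v_l(x,t)(1-g_{st+\X_\infty}(x))+v_r(x,t)g_{st+\X_\infty}(x)$ and $J_\X(t):=\int_\R(\vt-\vt_\infty)(x,t)\,dx$. Since $\vt-\vt_\infty=(v_r-v_l)(g_{st+\X}-g_{st+\X_\infty})$ and $g$ tends to its endpoint values together with all derivatives exponentially fast --- as one checks by linearising the first-order ODE for $v^S$ obtained from \cref{ode-1} at $\ovl{v}_{l,r}$, where $s^2+p'(\ovl{v}_l)<0<s^2+p'(\ovl{v}_r)$ by \cref{pressure} and \cref{shock-speed} --- the integral $J_\X$ is well defined, and $J_\X(t)\to 0$ as $t\to\infty$ because $\X(t)\to\X_\infty$. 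Integrating by parts in $x$ against the bounded antiderivative $x\mapsto\int_0^x(\phi_{0l}-\phi_{0r})(y)\,dy$ (bounded thanks to the zero-average hypothesis \cref{zero-ave}) and using the translation identity $\int_\R(g_{\X_0}-g_{\X_\infty})\,dx=\X_\infty-\X_0$ yields $J_\X(0)=(\ovl{v}_r-\ovl{v}_l)(\X_\infty-\X_0)$ plus an explicit term built from $g$, $\phi_{0l}$ and $\phi_{0r}$. On the other hand, a direct computation of $\tfrac{d}{dt}J_\X$ from the first equation of \cref{N-S} for $(v_{l,r},u_{l,r})$ and the defining relation $\int_\R f_2\,dx=0$ (see \cref{source,source-zero}) makes every $\X(t)$-dependent term drop out, giving $\tfrac{d}{dt}J_\X(t)=\int_\R[s(v_r-v_l)+(u_r-u_l)](x,t)\,g'_{st+\X_\infty}(x)\,dx$. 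Integrating this in $t$ over $[0,\infty)$ reduces the identification to evaluating the resulting space--time integral; this is done by one integration by parts in $x$ --- against the bounded antiderivative of $s(v_r-v_l)+(u_r-u_l)$, whose $t$-derivative is read off from \cref{N-S} --- followed by one in $t$, using $g''_{st+\X_\infty}=-\tfrac1s\p_t g'_{st+\X_\infty}$; the only time integrals that remain then involve the spatial averages $\tfrac1{\pi_{l,r}}\int_0^{\pi_{l,r}}(v_{l,r}-\ovl{v}_{l,r})\,dx$ and $\tfrac1{\pi_{l,r}}\int_0^{\pi_{l,r}}(u_{l,r}-\ovl{u}_{l,r})\,dx$, which vanish for all $t$, so only data terms survive and one lands on $\X_\infty=\mathcal{A}_1(\X_0)+\mathcal{C}_1$. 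The argument for $\Y_\infty$ is the same in structure, run on $\ut$, $\ut_\infty$ and the second line of \cref{ode-shift}; the new feature is that the flux of the $u$-equation of \cref{N-S} is $p(v)-\sigma'(v)\p_x u$, so the time integrals now also carry $\tfrac1{\pi_{l,r}}\int_0^{\pi_{l,r}}[p(v_{l,r})-p(\ovl{v}_{l,r})]\,dx$, which does \emph{not} vanish (because $p$ is nonlinear) and survives as the terms $\mp\int_0^\infty\tfrac1{\pi_{l,r}}\int_0^{\pi_{l,r}}[p(v_{l,r})-p(\ovl{v}_{l,r})]\,dx\,dt$ in \cref{Y-inf}, while the viscous term is handled via $\sigma'(v_{l,r})\p_x u_{l,r}=\sigma'(v_{l,r})\p_t v_{l,r}=\p_t\sigma(v_{l,r})$, which converts it into a total $t$-derivative and produces the remaining $\sigma$-terms of $\mathcal{C}_2$.

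The hard part is this last evaluation --- the exact computation of $\mathcal{C}_1$ and, above all, $\mathcal{C}_2$. The difficulty is structural: the periodic backgrounds $(v_{l,r},u_{l,r})$ do not decay as $\abs{x}\to\infty$ but oscillate, so no integration by parts in $x$ can be carried out naively; one must work throughout with their \emph{bounded} antiderivatives (which exist only by virtue of the zero-average condition \cref{zero-ave}) and verify that every boundary term at $x=\pm\infty$ vanishes thanks to the exponential decay of $g'$ and its derivatives. Combined with the two integrations by parts in $t$ and the careful bookkeeping of which spatial averages do and do not vanish, matching the final outcome term-by-term with the lengthy expressions \cref{X-inf} and \cref{Y-inf} is the substantive content of the proof; it is carried out in \cref{Sec-prop-shift}.
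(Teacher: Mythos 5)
Your first half (well-posedness of \cref{ode-shift} via Picard--Lindel\"{o}f, the cancellation of the $O(1)$ part of the right-hand side through \cref{R-H}, the bound $\abs{\X'}+\abs{\Y'}\le C\e e^{-\alpha t}$, and hence the existence of the limits) is correct and is essentially the paper's argument. Your identification of $\X_\infty$ and $\Y_\infty$, however, takes a genuinely different route from the paper, and it has a gap at exactly the step you flag as ``the hard part.'' The paper does not work with $\frac{d}{dt}\int_\R(\vt-\vt_\infty)\,dx$; it integrates \cref{eq-ansatz} over the expanding space--time domains $\Omega^N_{(x,t)}$ whose lateral boundaries are the curves $s\tau+\Y(\tau)+(-N+x)\pi_l$ and $s\tau+\Y(\tau)+(N+x)\pi_r$, i.e.\ translates of the shift curve by \emph{exact multiples of the periods}, and then \emph{averages the resulting identity over the translation parameter} $x\in[0,1]$ before sending $N\to\infty$. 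That averaging is not cosmetic: it is the mechanism that converts every boundary evaluation of a periodic quantity into an exact period average, which is why only the averages $\tfrac{1}{\pi_{l,r}}\int_0^{\pi_{l,r}}[p(v_{l,r})-p(\ovl{v}_{l,r})]\,dx$ and the data averages survive in \cref{X-inf,Y-inf}.

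Your scheme has no substitute for that step. After your integration by parts in $x$ against a bounded antiderivative $W(x,t)=\int_0^x(\cdots)\,dy$ and the integration by parts in $t$ via $g''_{st+\X_\infty}=-\tfrac1s\p_t g'_{st+\X_\infty}$, what remains is $\int_0^{\infty}\int_\R \p_t W\,g'_{st+\X_\infty}\,dx\,dt$ together with data terms of the form $\int_\R W(x,0)\,g'(x-\X_\infty)\,dx$. Two problems arise. First, $\int_\R f(x,t)\,g'(x-st-\X_\infty)\,dx$ for a periodic $f$ is \emph{not} the period average of $f$: decomposing $f$ into its average plus a zero-mean oscillation, the oscillatory part contributes a nonzero, merely exponentially decaying quantity whose time integral does not vanish (test on a single Fourier mode $e^{2\pi ikx/\pi_r}a_k(t)$). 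Second, since $\p_t W(x,t)$ equals a flux difference evaluated at $x$ minus the same flux evaluated at the base point $x=0$, your remaining time integral also carries $\int_0^{\infty}[\,\text{flux}\,](0,t)\,dt$ --- point values of the periodic solutions at $x=0$ --- and the data terms are $g'$-weighted integrals $\int_\R W(x,0)g'(x-\X_\infty)\,dx$ rather than the uniform averages $\tfrac{1}{\pi_{l,r}}\int_0^{\pi_{l,r}}\int_0^x(\cdots)\,dy\,dx$ appearing in $\mathcal{C}_1$ and $\mathcal{C}_2$. None of these spurious contributions appears in \cref{X-inf,Y-inf}, so they must all cancel among themselves, and your sketch neither exhibits this cancellation nor gives a reason for it. Your assertion that ``the only time integrals that remain involve the spatial averages'' is therefore unjustified as stated. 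The fix is either to adopt the paper's device (translate the reference point/boundaries by period multiples and average over one period of the translation) or to supply an independent argument that the oscillatory and base-point contributions cancel identically; as written, the identification of $\X_\infty$ and $\Y_\infty$ is incomplete.
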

The proof of \cref{Lem-shift} is placed in \cref{Sec-prop-shift}.
It is noted that all the integrals in \cref{X-inf,Y-inf} are bounded due to \cref{Lem-periodic}, and the constants $ \mathcal{C}_1 $ and $ \mathcal{C}_2 $ are independent of $ \X_0 $ and $ \Y_0. $

Note that as $ t\rightarrow +\infty, $ the ansatz $ (\vt,\ut) $ in \cref{ansatz-shock} tends to $ \left( v^S(x-st-\X_\infty),u^S(x-st-\Y_\infty) \right). $ This pair is a traveling wave solution to \cref{N-S} if and only if 
\begin{equation}\label{same-limit}
	\X_\infty = \Y_\infty,
\end{equation}
which unfortunately does not hold for generic initial perturbations. In order to ensure that \cref{same-limit} holds, we assume additionally that the initial data $ (v_0,u_0) $ satisfies \cref{same-limit}. In fact, by plugging \cref{eq-1} and \cref{eq-2} into \cref{X-inf} and \cref{Y-inf}, respectively, one can get that \cref{same-limit} is equivalent to 
\begin{equation}\label{zero-mass}
	\begin{aligned}
		& s ~\Big\{ \int_{-\infty}^{0} \left(v_0-v^S-\phi_{0l}\right)(x) dx + \int_{0}^{+\infty} \left(v_0-v^S-\phi_{0r}\right)(x) dx \\
		& \qquad - \frac{1}{\pi_l} \int_{0}^{\pi_l} \int_{0}^{x} \phi_{0l}(y) dydx + \frac{1}{\pi_r} \int_{0}^{\pi_r} \int_{0}^{x} \phi_{0r}(y) dydx \Big\} \\
		& \quad = -\int_{-\infty}^{0} \left(u_0-u^S-\psi_{0l}\right)(x) dx - \int_{0}^{+\infty} \left(u_0-u^S-\psi_{0r}\right)(x) dx  \\
		& \qquad + \frac{1}{\pi_l} \int_{0}^{\pi_l} \int_{0}^{x} \psi_{0l}(y) dydx - \int_{0}^{+\infty} \frac{1}{\pi_l} \int_{0}^{\pi_l} \left[ p(v_l(x,t)) - p(\ovlvl) \right] dx dt  \\
		& \qquad - \frac{1}{\pi_r} \int_{0}^{\pi_r} \int_{0}^{x} \psi_{0r}(y) dydx + \int_{0}^{+\infty} \frac{1}{\pi_r} \int_{0}^{\pi_r} \left[ p(v_r(x,t)) - p(\ovlvr) \right] dx dt \\
		& \qquad + \sigma(\ovlvl) - \sigma(\ovlvr) - \frac{1}{\pi_l} \int_{0}^{\pi_l} \sigma(\ovlvl+\phi_{0l}(x)) dx + \frac{1}{\pi_r} \int_{0}^{\pi_r} \sigma(\ovlvr+\phi_{0r}(x)) dx.
	\end{aligned}
\end{equation}
We call \cref{zero-mass} as a zero-mass type condition.

\begin{Rem} 
	If all the periodic perturbations $ \phi_{0l}, \phi_{0r}, \psi_{0l} $ and $ \psi_{0r} $ vanish, \cref{zero-mass} turns to
	\begin{equation*}
	s \int_\R  \left( v_0-v^S\right) dx + \int_\R  \left(u_0-u^S\right) dx =0.
	\end{equation*}
	This condition is equivalent to that there exists a (unique) constant $ \bar{x}\in\R $ such that
	\begin{equation*}
	\int_\R \left( v_0(x)-v^S(x-\bar{x})\right) dx = 0, \quad \int_\R \left( u_0(x)-u^S(x-\bar{x}) \right) dx = 0,
	\end{equation*}
	which is indeed the zero-mass condition given in \cite{Matsumura1985,Goodman1986}. This is why we denote \cref{zero-mass} as a zero-mass type condition.
\end{Rem}

\begin{Rem}
	In the special case that $ v_0 = v^S + \phi_0 $ and $ u_0 = u^S + \psi_0, $ where $ (\phi_0,\psi_0) $ is periodic with period $ \pi, $ \cref{zero-mass} is reduced to
	\begin{align*}
	0 & = - \int_{0}^{+\infty} \frac{1}{\pi} \int_{0}^{\pi} \left( p(v_l) - p(\ovlvl) \right) dx dt + \int_{0}^{+\infty} \frac{1}{\pi} \int_{0}^{\pi} \left( p(v_r) - p(\ovlvr) \right) dx dt \\
	& \quad + \sigma(\ovlvl) - \sigma(\ovlvr) - \frac{1}{\pi} \int_{0}^{\pi} \sigma\left(\ovlvl+\phi_0 \right) dx + \frac{1}{\pi} \int_{0}^{\pi} \sigma\left(\ovlvr+\phi_0 \right) dx.
	\end{align*}
\end{Rem}

Once the ansatz is well constructed, we define the anti-derivative of the difference between the initial data \cref{ic} and ansatz,
\begin{equation}\label{Phsi-0}
\Phi_0(x) := \int_{-\infty}^{x} \left( v_0(y)-\vt(y,0)\right) dy, \quad  \Psi_0(x) := \int_{-\infty}^{x} \left(u_0(y)-\ut(y,0) \right) dy,
\end{equation}
i.e.
\begin{align*}
\Phi_0(x) & = \int_{-\infty}^x \big[  (v_0-\ovlvl-\phi_{0l}) + (\ovlvr-\ovlvl+\phi_{0r}-\phi_{0l}) g_{\X_0}\big](y) dy, \\
\Psi_0(x) & = \int_{-\infty}^x \big[  (u_0-\ovlul-\psi_{0l}) + (\ovlur-\ovlul+\psi_{0r}-\psi_{0l}) g_{\Y_0}\big](y) dy.
\end{align*}
Set 
\begin{equation}\label{E0-shock}
E_0 := \norm{\phi_{0l}, \psi_{0l}}_{H^3(0,\pi_l)} + \norm{\phi_{0r}, \psi_{0r}}_{H^3(0,\pi_r)} + \norm{\Phi_0}_{H^2(\R)} + \norm{ \Psi_0}_{H^2(\R)}.
\end{equation}
Now we are ready to give the main result of this paper.

\begin{Thm}\label{Thm-shock}
	Assume that the periodic perturbations in \cref{end-behavior} satisfy \cref{zero-ave,zero-mass}. Then there exists $ \e_0>0 $ such that, if $ E_0 \leq \e_0, $ there admits a unique global solution of \cref{N-S,ic}, satisfying
	\begin{align*}
	& v-\vt \in C\left([0,+\infty);H^1(\R)\right) \cap L^2\left((0,+\infty);H^1(\R)\right), \\
	& u-\ut \in C\left([0,+\infty);H^1(\R)\right) \cap L^2\left((0,+\infty);H^2(\R)\right),
	\end{align*}
	and
	\begin{equation}\label{ineq-Thm}
	\norm{ (v,u)(\cdot,t) - (v^S,u^S)(\cdot-st-\X_\infty) }_{L^\infty(\R)} \rightarrow 0 \quad \text{ as } t\rightarrow +\infty,
	\end{equation}
	with the constant shift $ \X_\infty \left(= \Y_\infty\right) $ given by 
	\begin{equation*}
	\begin{aligned}
	\X_\infty = &~ -\frac{1}{\ovlvr-\ovlvl} \Big\{ \int_{-\infty}^{0} \left(v_0-v^S-\phi_{0l}\right)(x) dx + \int_{0}^{+\infty} \left(v_0-v^S-\phi_{0r}\right)(x)dx \Big\} + \mathcal{C}_1,
	\end{aligned}
	\end{equation*}
where $ \mathcal{C}_1 $ is given in \cref{X-inf}.
\end{Thm}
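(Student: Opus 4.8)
The plan is to recast the Cauchy problem \cref{N-S,ic} as a perturbation problem around the ansatz $(\vt,\ut)$ and to close a global a priori estimate for the \emph{anti-derivatives} of the perturbation by the energy method, in the two-step scheme announced in the introduction.

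\textbf{Step 1: reformulation.} Set $\phi:=v-\vt$, $\psi:=u-\ut$. Because $\X,\Y$ were constructed so that \cref{zero-mass-2} holds, the anti-derivatives $\Phi(x,t):=\int_{-\infty}^{x}\phi(y,t)\,dy$ and $\Psi(x,t):=\int_{-\infty}^{x}\psi(y,t)\,dy$ are well defined in $L^2(\R)$, vanish at $x=\pm\infty$ for all $t\ge0$, coincide with $\Phi_0,\Psi_0$ at $t=0$, and — subtracting \cref{eq-ansatz} from \cref{N-S} and integrating once in $x$ — satisfy
\begin{align*}
\p_t\Phi-\p_x\Psi &= F_1+\mathcal{F}_2,\\
\p_t\Psi+\big(p(v)-p(\vt)\big)-\big(\sigma'(v)\p_x u-\sigma'(\vt)\p_x\ut\big) &= F_3+\mathcal{F}_4,
\end{align*}
where $\mathcal{F}_i(x,t):=\int_{-\infty}^{x}f_i(y,t)\,dy$ ($i=2,4$) are well defined and localized since $\int_\R f_i\,dx=0$ by \cref{source-zero}. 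The structural fact that drives the whole argument is that, because \cref{zero-mass} forces $\X_\infty=\Y_\infty$, \cref{Lem-shift} — combined with the cancellation of the leading parts of $f_2,f_4$ produced by \cref{R-H} and with the exponential decay of the periodic solutions from \cref{Lem-periodic} — gives
\begin{equation*}
\norm{F_1(\cdot,t)}_{H^2(\R)}+\norm{F_3(\cdot,t)}_{H^2(\R)}+\norm{\mathcal{F}_2(\cdot,t)}_{H^2(\R)}+\norm{\mathcal{F}_4(\cdot,t)}_{H^2(\R)}\le C\e e^{-\alpha t},
\end{equation*}
so that every source term and its derivatives are exponentially small and time-integrable. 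For the lowest-order estimate I would, following \cite{He2019a}, pass to the volume/effective-velocity variables: writing $p(v)-p(\vt)=b\,\phi$ and $\sigma(v)-\sigma(\vt)=a\,\phi$ with $a>0$, $b<0$ smooth, and setting $W:=\Psi-(\sigma(v)-\sigma(\vt))$ (an anti-derivative of $w-\tilde w$ with $w:=u-\p_x\sigma(v)$, $\tilde w:=\ut-\p_x\sigma(\vt)$, which differs from $\Psi$ by a term controlled by $\norm{\phi}_{H^1}$), the first equation becomes parabolic, $\p_t\Phi-a\,\p_{xx}\Phi=\p_xW+(\text{l.o.t.})$, and the $W$-equation reads $\p_tW+b\,\p_x\Phi=(\text{l.o.t.})$.

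\textbf{Step 2: local existence and continuation.} Since $(\phi,\psi)(\cdot,0)\in H^1(\R)$ and $v_0$ is bounded above and away from zero, a standard iteration gives a unique local solution with $(\phi,\psi)\in C([0,T_0];H^1(\R))$ and $(\Phi,\Psi)\in C([0,T_0];H^2(\R))$. I would then argue by continuation: assuming the solution exists on $[0,T]$ with $N(T):=\sup_{0\le t\le T}\big(\norm{\Phi(t)}_2+\norm{\Psi(t)}_2\big)\le\chi$ for a small $\chi$ — which, by Sobolev embedding, keeps $v$ in a fixed compact subset of $(0,\infty)$ — the goal is to prove $N(T)^2+\int_0^T\mathcal{D}(s)\,ds\le C\,E_0^2$ with $C$ independent of $T$, where $\mathcal{D}(t)\gtrsim\norm{\phi(t)}_{H^1(\R)}^2+\norm{\psi(t)}_{H^2(\R)}^2$ collects the dissipation; once $E_0,\chi$ are small this closes the continuation and yields the global solution with the stated regularity.

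\textbf{Step 3: the a priori estimate, and the main obstacle.} The basic estimate is carried out in the $(v,w)$-form: multiplying the $\Phi$-equation by $-b\,\Phi$ and the $W$-equation by $W$ and adding, the leading cross terms cancel exactly and parabolicity leaves the dissipation $\int_\R(-ab)\,\phi^2\,dx\gtrsim\norm{\phi}^2$; the commutator terms involving $\p_x a,\p_x b$ are localized near the shock (hence controllable) or small away from it, the remaining quadratic terms are absorbed by a Gronwall argument in $\norm{\Phi}^2+\norm{W}^2$, and the source terms are bounded by $C\e e^{-\alpha t}$. Multiplying the $\Psi$-equation by $\Psi$ and using $\int_\R\sigma'(v)(\p_x\Psi)^2=\int_\R\sigma'(v)\psi^2$ adds $\int_0^t\norm{\psi}^2\,ds$ to the dissipation. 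For the higher-order part I would return to \cref{N-S} and \cref{eq-ansatz}, differentiate the perturbation equations (once in the $\phi$-equation, once and twice in the parabolic $\psi$-equation), and run $L^2$ energy estimates; here all new error terms are either cubic in the small quantities — hence $\lesssim(\chi+\e)\mathcal{D}$ — or time-integrable sources, and the dissipations $\int\sigma'(v)(\p_x\psi)^2$, $\int\sigma'(v)(\p_{xx}\psi)^2$ together with $\int\norm{\p_x\phi}^2$ (from the effective-velocity parabolicity again) complete $\mathcal{D}$. Adding the two parts with suitable weights and taking $E_0,\chi$ small gives $N(T)^2+\int_0^T\mathcal{D}\le CE_0^2$ and $\int_0^{\infty}\big(\norm{\phi}_{H^1}^2+\norm{\psi}_{H^2}^2\big)\,dt<\infty$; uniqueness is a routine difference estimate. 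The main obstacle is precisely this basic estimate in the \emph{large-amplitude} regime: for a weak shock a naive quadratic energy in $(\Phi,\Psi)$ closes, but for a strong shock the cross terms and the commutator terms coming from $p''(\vt)\p_x\vt$ and $\sigma''$ are not dominated by the only partially coercive dissipation — the effective-velocity reformulation of \cite{He2019a} is what makes the $\Phi$-equation genuinely parabolic and the leading cross terms cancel, and pushing the oscillatory, non-autonomous sources $F_1,F_3,\mathcal{F}_2,\mathcal{F}_4$ through this scheme (which relies on their exponential decay, i.e. on $\X_\infty=\Y_\infty$ from \cref{zero-mass}) is the technical heart of the proof.

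\textbf{Step 4: asymptotics.} The map $t\mapsto\norm{(\phi,\psi)(t)}^2$ is absolutely continuous and, by the equations together with the a priori bounds, its derivative is integrable on $(0,\infty)$; combined with $\int_0^\infty\norm{(\phi,\psi)(t)}^2\,dt<\infty$ this gives $\norm{(\phi,\psi)(t)}\to0$, whence $\norm{(\phi,\psi)(t)}_{L^\infty(\R)}^2\le C\,\norm{(\phi,\psi)(t)}\,\norm{\p_x(\phi,\psi)(t)}\to0$ using the uniform $H^1$-bound. Next, from the definition \cref{g} of $g$ one has $v^S(x-st-\X_\infty)=\ovlvl\big(1-g_{st+\X_\infty}(x)\big)+\ovlvr g_{st+\X_\infty}(x)$, hence
\begin{equation*}
\vt-v^S(\cdot-st-\X_\infty)=(v_l-\ovlvl)(1-g_{st+\X})+(v_r-\ovlvr)g_{st+\X}+(\ovlvr-\ovlvl)\big(g_{st+\X}-g_{st+\X_\infty}\big),
\end{equation*}
whose $L^\infty(\R)$-norm tends to $0$ by \cref{Lem-periodic,Lem-shift}; the analogous identity holds for $\ut-u^S(\cdot-st-\Y_\infty)$, and $\Y_\infty=\X_\infty$ by \cref{zero-mass}. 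Adding these to $\norm{(\phi,\psi)(t)}_{L^\infty}\to0$ gives \cref{ineq-Thm}, and the stated formula for $\X_\infty$ follows by substituting \cref{eq-1} into the relation $\X_\infty=\mathcal{A}_1(\X_0)+\mathcal{C}_1$ of \cref{Lem-shift}.
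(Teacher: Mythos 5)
Your proposal is correct and follows essentially the same route as the paper: the anti-derivative formulation with the sources made exponentially decaying by the choice of $\X,\Y$ and the condition $\X_\infty=\Y_\infty$, the effective-velocity variable $W=\Psi-(\sigma(v)-\sigma(\vt))$ of \cite{He2019a} to obtain a parabolic $\Phi$-equation and close the large-amplitude basic estimate, a return to the original system for the higher-order derivatives, and the final $L^\infty$ convergence by combining $\norm{(\phi,\psi)}_{L^\infty}\to0$ with the convergence of the ansatz to the shifted profile. The only differences are cosmetic (e.g.\ your weight $-b\Phi$ versus the paper's multiplier $-\Wn/p'(\vtn)$ on the second equation), so there is nothing of substance to flag.
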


\begin{Rem}
	In \cref{Thm-shock}, the strength of the shock wave can be arbitrarily large.
\end{Rem}

\begin{Rem}
	It is an interesting problem to remove the zero-mass type condition \cref{zero-mass}.
\end{Rem}

\section{Proof of \cref{Thm-shock}}\label{Sec-thm}

For convenience, we change the coordinates $ (x,t) \rightarrow (\xi = x-st,t). $  In the following, we write the bold letters to denote the functions under the new coordinates $ (\xi,t), $ e.g.
\begin{equation*}
\vn(\xi,t) = v(\xi+st,t), \quad \vtn(\xi,t) = \vt(\xi+st,t).
\end{equation*}
Define the perturbation terms 
\begin{equation*}
\phi(\xi,t) = \left(\vn-\vtn\right)(\xi,t), \quad  \psi(\xi,t) = \left(\un-\utn\right)(\xi,t).
\end{equation*}
It follows from \cref{N-S,eq-ansatz} that
\begin{equation}\label{equ-phi-psi}
	\begin{cases}
		\p_t \phi - s\p_\xi \phi - \p_\xi \psi = \p_\xi \Hn_1, \\
		\p_t \psi - s\p_\xi \psi + \p_\xi (p(\vn)-p(\vtn)) = \p_\xi \big(\sigma'(\vn) \p_\xi \un - \sigma'(\vtn) \p_\xi \utn \big) + \p_\xi \Hn_2,
	\end{cases}
\end{equation}
where 
\begin{equation}\label{H-1-2}
	\Hn_1 := \Fn_1 + \Fn_2, \quad  \Hn_2 := \Fn_3 + \Fn_4,	
\end{equation}
with $ \Fn_1(\xi,t) = F_1(\xi+st,t) $ and $ \Fn_3(\xi,t) = F_3(\xi+st,t) $ given by \cref{source}, and 
$$ \Fn_2(\xi,t) := \int_{-\infty}^{\xi} \fn_2(y,t) dy = \int_{-\infty}^{\xi} f_2(y+st,t) dy = - \int_\xi^{+\infty} f_2(y+st,t) dy, $$ 
$$ \Fn_4(\xi,t) := \int_{-\infty}^{\xi} \fn_4(y,t) dy = \int_{-\infty}^{\xi} f_4(y+st,t) dy = - \int_\xi^{+\infty} f_4(y+st,t) dy. $$
Define the anti-derivative variables
\begin{equation}\label{Phi-Psi}
\Phi(\xi,t) := \int_{-\infty}^{\xi} \phi(y,t) dy, \quad  \Psi(\xi,t) := \int_{-\infty}^{\xi} \psi(y,t) dy.
\end{equation} 
From \cref{equ-phi-psi}, it holds that
\begin{equation}\label{equ-Phi-Psi}
\begin{cases}
\p_t \Phi - s\p_\xi \Phi - \p_\xi \Psi = \Hn_1, \\
\p_t \Psi - s\p_\xi \Psi + p'(\vtn) \p_\xi \Phi = - p(\vn|\vtn) + \sigma'(\vn) \p_\xi^2 \Psi + \big(\sigma'(\vn) - \sigma'(\vtn)\big) \p_\xi \utn + \Hn_2,
\end{cases}
\end{equation}
with initial data
\begin{equation}\label{ic-anti}
	(\Phi,\Psi)(\xi,0) = (\Phi_0, \Psi_0)(\xi) \in H^2(\R)\times H^2(\R),
\end{equation}
where
\begin{equation}\label{nonlinear-p}
	p(\vn|\vtn) := p(\vn)-p(\vtn)-p'(\vtn) \p_\xi \Phi \geq 0.
\end{equation}
For any $ T>0, $ let 
\begin{equation*}
	\begin{aligned}
		\mathcal{B}(0,T) = \Big\{ & (\Phi,\Psi) \in C\left(0,T;H^2(\R)\times H^2(\R)\right): \\
		& \qquad \p_\xi \Phi \in L^2 \left(0,T;H^1(\R)\right), \p_\xi \Psi \in L^2\left(0,T;H^2(\R)\right)\Big\}.
	\end{aligned}
\end{equation*}
We aim to seek the solution $ (\Phi,\Psi) $ of \cref{equ-Phi-Psi,ic-anti} in the space $ \mathcal{B}(0,+\infty) $ provided that the initial perturbations are small.

\begin{Thm}\label{Thm-pert}
Under the assumptions of \cref{Thm-shock}, there exists $ \e_0>0, $ such that if $ \e= E_0\leq \e_0, $ then  there is a unique solution $ (\Phi,\Psi) \in \mathcal{B}(0,+\infty) $  of \cref{equ-Phi-Psi,ic-anti}, satisfying
\begin{equation}\label{0-infty}
	\sup_{t >0} \norm{ \Phi,\Psi }_2^2(t) + \int_{0}^{+\infty} \left(\norm{\p_\xi \Phi}_1^2 + \norm{\p_\xi \Psi}_2^2 \right)(t) dt \leq C \e.
\end{equation}
Moreover, it holds that
\begin{equation}\label{0limit}
	\norm{\phi,\psi}_{L^\infty(\R)}(t) \to 0 \quad \text{ as } t\to +\infty.
\end{equation}
\end{Thm}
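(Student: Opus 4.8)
The theorem follows from the usual continuation argument. A short-time solution $(\Phi,\Psi)\in\mathcal{B}(0,T_0)$ is obtained by a standard iteration — once $\phi=\p_\xi\Phi$ is small, $\sigma'(\vn)$ stays bounded below, so the $\Psi$-equation in \cref{equ-Phi-Psi} is uniformly parabolic and a contraction mapping applies — and uniqueness on any interval follows from an $L^2$-estimate for the difference of two solutions. Everything thus reduces to the closed a priori estimate: there exist $\nu_0>0$ and $C>0$, independent of $T$, such that any $(\Phi,\Psi)\in\mathcal{B}(0,T)$ solving \cref{equ-Phi-Psi,ic-anti} with $\sup_{[0,T]}\norm{\Phi,\Psi}_2\le\nu_0$ satisfies
\[
\sup_{0<t<T}\norm{\Phi,\Psi}_2^2(t)+\int_0^T\bigl(\norm{\p_\xi\Phi}_1^2+\norm{\p_\xi\Psi}_2^2\bigr)(t)\,dt\le C\,E_0 .
\]
With $E_0\le\e_0$ small this improves the bound to $\nu_0/2$, so the local solution continues to $T=+\infty$, giving \cref{0-infty}.

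\textbf{Basic estimate via the effective velocity.} The only real difficulty is the lowest-order estimate, because $|\ovlvr-\ovlvl|$ is not small: testing the first equation of \cref{equ-Phi-Psi} with $-p'(\vtn)\Phi$ and the second with $\Psi$ leaves a coupling term $-\int p''(\vtn)\,\p_\xi\vtn\,\Phi\Psi$ (with $\p_\xi\vtn\approx(v^S)'(\cdot-st-\X)$) whose sign is uncontrollable for a strong shock. Following \cite{He2019a}, I would pass to the effective velocity $\mathbf{h}:=\un-\p_\xi\sigma(\vn)$, whose momentum equation is the transport equation $\p_t\mathbf{h}-s\p_\xi\mathbf{h}+\p_\xi p(\vn)=(\text{error})$, and run the energy estimate on the pair $(\Phi,\Wn)$, where $\Wn:=\Psi-\bigl(\sigma(\vn)-\sigma(\vtn)\bigr)$ is the anti-derivative of $\mathbf{h}-\tilde{\mathbf{h}}$, $\tilde{\mathbf{h}}:=\utn-\p_\xi\sigma(\vtn)$; $\Wn$ is well defined, lies in $H^2(\R)$ because $\sigma(\vn)-\sigma(\vtn)$ vanishes at both infinities, and $\Psi$ is recovered from $\Wn$ up to a quadratic term in $\phi$ (hence controlled by the dissipation of $\p_\xi\Phi$). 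In these variables $\Phi$ solves a uniformly parabolic equation, $\p_t\Phi-s\p_\xi\Phi-\p_\xi\bigl(\sigma'(\vtn)\p_\xi\Phi\bigr)=\p_\xi\Wn+\p_\xi(\text{quad})+\Hn_1$, while $\Wn$ solves the transport equation $\p_t\Wn-s\p_\xi\Wn+p'(\vtn)\p_\xi\Phi=(\text{quadratic in }\p_\xi\Phi)+(\text{error})$. Testing the first by $-p'(\vtn)\Phi$ and the second by $\Wn$ and adding, the first-order coupling terms telescope and cancel; what remains on the good side is the parabolic dissipation $\norm{\p_\xi\Phi}^2$ together with the shock-layer term $\int_\R(v^S)'\Phi^2$, which carries the \emph{correct} sign because $p''>0$, $(v^S)'>0$, $s>0$ — precisely the structure the raw $(\Phi,\Psi)$ variables lack — and the quadratic remainders (of the form $(\p_\xi\Phi)^2\Wn$, $(v^S)'\Phi\,\p_\xi\Phi$, etc.) are absorbed into this dissipation with the small constant $N(T):=\sup_{[0,T]}\norm{\Phi,\Psi}_2$.

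\textbf{Source terms, higher order, conclusion.} The inhomogeneities $\Hn_1=\Fn_1+\Fn_2$, $\Hn_2=\Fn_3+\Fn_4$ (and their $(\Phi,\Wn)$-analogues) are built from $v_{l,r}-\ovl{v}_{l,r}$, $u_{l,r}-\ovl{u}_{l,r}$, $\p_x u_{l,r}$ and $s+\X'$, $s+\Y'$, each carrying a factor $g$, $1-g$ or $g'$ shifted by $\X$ or $\Y$. By \cref{Lem-periodic} the first three are $O(\e\,e^{-\alpha t})$ uniformly; by \cref{Lem-shift} so are $\X'$, $\Y'$ and $\X-\X_\infty$, $\Y-\Y_\infty$; and the leading-order parts of $\Fn_1,\dots,\Fn_4$ cancel identically thanks to \cref{R-H} and the profile relations \cref{ode-1,g}, so every source term is $O(E_0 e^{-\alpha t})$ with finite, exponentially decaying $L^1_\xi\cap L^2_\xi$ norm. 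Cauchy--Schwarz and Young then bound their contribution to the energy inequality by $\tfrac12(\text{dissipation})+C E_0 e^{-\alpha t}\norm{\Phi,\Wn}_2$, and integrating $\tfrac{d}{dt}\sqrt{E}\leq C E_0 e^{-\alpha t}$ closes the basic estimate for $\norm{\Phi,\Psi}$. For the higher derivatives I would return to \cref{equ-phi-psi}: the uniformly parabolic $\psi$-equation yields, via energy estimates for $\p_\xi\psi$, $\p_\xi^2\psi$ and the effective-velocity parabolic equation for $\phi$ to recover $\int_0^t\norm{\p_\xi\phi}_1^2$, the full bound \cref{0-infty} after the controlled lower-order norms and the exponentially small sources are absorbed. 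Finally, \cref{0-infty} gives $\int_0^\infty(\norm{\phi}_1^2+\norm{\psi}_1^2)\,dt<\infty$ with $\norm{\phi}_1,\norm{\psi}_1$ bounded; \cref{equ-phi-psi} yields $\tfrac{d}{dt}(\norm{\phi}^2+\norm{\psi}^2)\in L^1(0,\infty)$, hence $\norm{\phi}(t),\norm{\psi}(t)\to0$ and thus $\norm{\phi,\psi}_{L^\infty}(t)\le C\norm{\phi,\psi}^{1/2}\norm{\p_\xi(\phi,\psi)}^{1/2}\to0$, i.e. \cref{0limit}; since $(\vtn,\utn)$ tends uniformly to $(v^S,u^S)(\cdot-st-\X_\infty)$ by \cref{Lem-periodic,Lem-shift} and $\X_\infty=\Y_\infty$ (enforced by \cref{zero-mass}), \cref{ineq-Thm} of \cref{Thm-shock} follows.

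\textbf{Main obstacle.} The crux is the basic estimate: checking that in the effective-velocity formulation the first-order coupling genuinely telescopes, leaving only dissipation of the right sign and quadratic remainders that are absorbable independently of the shock amplitude, and that the exponentially decaying but spatially spread-out source pieces can still be handled. This is the single point where the largeness of the shock and the far-field oscillations interact, and where the careful construction of the ansatz \cref{ansatz-shock} and of the shifts $\X,\Y$ through \cref{ode-shift} is essential.
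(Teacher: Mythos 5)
Your overall architecture coincides with the paper's: local existence plus a continuation argument, the effective velocity $\nn=\un-\p_\xi\sigma(\vn)$ with the anti-derivative $\Wn=\Psi-\big(\sigma(\vn)-\sigma(\vtn)\big)$ exactly as in \cref{W-def}, exponential decay of the sources via \cref{Lem-periodic,Lem-shift}, higher-order estimates obtained by returning to \cref{equ-phi-psi}, and the Matsumura--Nishihara argument for \cref{0limit}. But there is a genuine gap at precisely the point you yourself call the crux. You test \cref{equ-anti}$_1$ by $-p'(\vtn)\Phi$ and \cref{equ-anti}$_2$ by $\Wn$. With this choice the first-order coupling terms sum to $p'(\vtn)\,\p_\xi(\Phi\Wn)=\p_\xi\big(p'(\vtn)\Phi\Wn\big)-p''(\vtn)\,\p_\xi\vtn\,\Phi\Wn$, so they do \emph{not} cancel: a residual $-p''(\vtn)\big(v^S_\X\big)'\Phi\Wn$ survives, indefinite in sign and of size comparable to the shock strength --- exactly the coupling you set out to eliminate. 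Worse, the localized zeroth-order term comes out with the wrong sign: since $(\p_t-s\p_\xi)\vtn\approx\p_\xi\utn\approx\big(u^S_\Y\big)'<0$, the material derivative of your weight $\abs{p'(\vtn)}$ on $\Phi^2$ produces $\tfrac12 p''(\vtn)\big(u^S_\Y\big)'\Phi^2=-\tfrac{s}{2}\,p''(\vtn)\big(v^S_\Y\big)'\Phi^2\le 0$ on the good side of the inequality, not the positive term $\int_\R (v^S)'\Phi^2$ you assert. Neither defect is absorbable for a large-amplitude shock: the dissipation controls $\p_\xi\Phi$ but not $\Phi$ itself in the shock layer, and $\norm{(v^S)'}_{L^1(\R)}$ equals the shock strength.

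The paper's choice is the transpose of yours: multiply \cref{equ-anti}$_1$ by $\Phi$ and \cref{equ-anti}$_2$ by $-\Wn/p'(\vtn)$ (see \cref{equality-1}). Then the cross terms sum to the exact derivative $-\p_\xi(\Phi\Wn)$ with no remainder, and the weight $1/\abs{p'(\vtn)}$ sits on $\Wn^2$, whose material derivative yields $-\tfrac{p''(\vtn)}{2(p'(\vtn))^2}\big(u^S_\Y\big)'\Wn^2\ge 0$, i.e.\ the good localized term $\norm{\sqrt{-(u^S_\Y)'}\,\Wn}^2$ appearing in \cref{esti-0}. The direction of the weighting is not a symmetric convenience but the actual content of the effective-velocity trick for large shocks; as written, your basic estimate does not close, although it is repaired simply by swapping the multipliers. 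The remainder of your outline (sources, $H^2$ estimates via \cref{equ-phi-psi}, continuation, decay) is consistent with the paper.
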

Once \cref{Thm-pert} is proved, to complete the proof of \cref{Thm-shock}, it suffices to show \cref{ineq-Thm}. In fact, since
\begin{equation*}
	\begin{aligned}
		\sup_{x\in\R} \abs{(v-\vt)(x,t)} & =
		\sup_{\xi\in\R} \abs{(\vn-\vtn)(\xi,t)} = \norm{\phi}_{L^\infty(\R)} \to 0 \quad \text{as } t\rightarrow +\infty,
	\end{aligned}
\end{equation*}
and
\begin{equation}\label{ineq-v}
	\begin{aligned}
		& \sup_{x\in\R} \abs{\vt(x,t)-v^S\left(x-st-\X_\infty\right)} \\
		& \quad \leq  \sup_{x\in\R} \abs{ \left( v^S(x-st-\X(t)) - v^S(x-st-\X_\infty) \right) } \\
		& \qquad + \sup_{x\in\R} \abs{\left(v_l-\ovlvl\right)(1-g_{\X}) + \left(v_r-\ovlvr\right)g_{\X} }  \\
		& \quad \leq C \abs{\X(t)-\X_\infty} + \sup_{x\in\R} \abs{v_l-\ovlvl} + \sup_{x\in\R} \abs{v_r-\ovlvr} \\
		& \quad \leq Ce^{-\alpha t},
	\end{aligned}
\end{equation}
thus \cref{ineq-Thm} holds for $ v. $ And the estimate for $ u $ in \cref{ineq-Thm} can be proved similarly.

It remains to prove \cref{Thm-pert}. 
We first show the following decay properties of the error terms in \cref{equ-Phi-Psi}.

\begin{Lem}\label{Lem-H}
	Under the assumptions of \cref{Thm-shock}, it holds that 
	\begin{equation*}
		\norm{\Hn_1}_2(t), \norm{\Hn_2}_1(t) \leq C \e e^{-\alpha t}, \quad t\geq 0,
	\end{equation*}
	where $ C>0 $ is independent of $ \e $ and $ t, $ and $ \alpha>0 $ is the constant given in \cref{Lem-shift}.
\end{Lem}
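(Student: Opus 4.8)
The plan is to bound each of the error terms $\Fn_1,\Fn_2,\Fn_3,\Fn_4$ (whose sums give $\Hn_1,\Hn_2$ via \cref{H-1-2}) by tracing through the explicit formulas in \cref{source}, exploiting two independent sources of exponential decay: the decay of the periodic solutions $(v_{l,r}-\ovl v_{l,r}, u_{l,r}-\ovl u_{l,r})$ in $H^k(0,\pi)$ from \cref{Lem-periodic}, and the decay of the shift derivatives $\X'(t),\Y'(t)$ together with $\abs{\X(t)-\X_\infty}, \abs{\Y(t)-\Y_\infty}$ from \cref{Lem-shift}. The key structural observation is that every term in $F_1,f_2,F_3,f_4$ contains at least one factor that decays exponentially: either a difference like $u_r-u_l$, $v_r-v_l$, $\sigma'(\vt)-\sigma'(v_l)$, or $p(v_r)-p(v_l)$ (each of which equals a fixed constant difference plus an $O(\e e^{-\alpha t})$ periodic correction, and we will need to watch that the constant parts cancel appropriately), a spatial derivative $\p_x u_{l,r}$ of a periodic solution (which is $O(\e e^{-\alpha t})$ in $L^\infty$ by Sobolev embedding), or the factor $g_{st+\Y}-g_{st+\X}$ which is controlled by $\abs{\X(t)-\Y(t)}$.

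First I would estimate $\Fn_1$: since $\Fn_1(\xi,t)=F_1(\xi+st,t)=(u_r-u_l)(g_{st+\Y}-g_{st+\X})$, and the constant part of $u_r-u_l$ is $\ovlur-\ovlul$, we have $\norm{\Fn_1}_2 \leq C\abs{\X(t)-\Y(t)} + C\e e^{-\alpha t}$; because both $\X,\Y\to$ the common limit $\X_\infty=\Y_\infty$ exponentially (using \cref{same-limit} and \cref{Lem-shift}), this gives $\norm{\Fn_1}_2\leq C\e e^{-\alpha t}$. The Sobolev norms of $g$ and its derivatives are finite by \cref{Lem-cite}. Next, for $\Fn_2(\xi,t)=\int_{-\infty}^{\xi} f_2(y+st,t)\,dy$, I would use that $f_2 = (u_r-u_l)g'_{st+\X} + (v_r-v_l)g'_{st+\X}(s+\X')$; writing $u_r-u_l = (\ovlur-\ovlul) + (\text{periodic }O(\e e^{-\alpha t}))$ and similarly for $v_r-v_l$, and invoking the identity $(\ovlur-\ovlul) + (\ovlvr-\ovlvl)s = 0$ from \cref{R-H} (so that $-s$ in \cref{ode-shift} is precisely what makes the leading constant parts cancel in the average-zero sense), one sees the non-decaying pieces combine into $(v_r-v_l)g'_{st+\X}\X'$ plus cross terms, all $O(\e e^{-\alpha t})$; the antiderivative is controlled because $g'$ is integrable and the periodic factors are uniformly bounded, so $\norm{\Fn_2}_2\leq C\e e^{-\alpha t}$. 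The terms $\Fn_3$ and $\Fn_4$ are handled the same way: in $F_3$ each summand has a factor $p(\vt)-p(v_l)=(p(v_r)-p(v_l))g_{st+\X}$ (an $O(1)$ difference that does not decay, so this term needs the subtraction against the traveling-wave profile — actually here one compares $p(v^S_{\X})$ against the background and uses the ODE \cref{ode-1} plus $\abs{\X-\X_\infty}$, $\norm{v_{l,r}-\ovl v_{l,r}}$ decay), or a factor $\sigma'(\vt)-\sigma'(v_{l,r})$ or $u_r-u_l$ or $\p_x u_{l,r}$; in $f_4$ every term carries $g'_{st+\Y}$ times either $\p_x u_{l,r}$, or $(u_r-u_l)(s+\Y')$, or $p(v_r)-p(v_l)$, and the ODE \cref{ode-shift} for $\Y'$ was chosen exactly so that the zero-average condition kills the constant parts — but here I should be careful, since \cref{ode-shift} only guarantees $\int_\R f_4 = 0$, not pointwise smallness; the pointwise $O(\e e^{-\alpha t})$ bound on $f_4$ itself follows from expanding the periodic solutions around their constant averages and noting that, although $p(v_r)-p(v_l)$ has a nonzero constant part, after integration in $y$ the resulting $\Fn_4$ is bounded because $\int_\R f_4\,dy = 0$ lets us write $\Fn_4(\xi,t) = -\int_\xi^{+\infty} f_4(y+st,t)\,dy$ and the constant-in-$y$ contributions to $f_4$ are multiplied by $g'$, which is integrable, leaving only the exponentially small amplitude.

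The main obstacle will be the bookkeeping in $\Fn_2$ and $\Fn_4$: because $f_2$ and $f_4$ are built from $g'$ times quantities whose leading parts are the $O(1)$ jumps $\ovlvr-\ovlvl$, $\ovlur-\ovlul$ and $p(\ovlvr)-p(\ovlvl)$, one cannot simply bound $\abs{f_2}$ by $O(\e e^{-\alpha t})$ termwise — the cancellations enforced by the choice of $\X',\Y'$ in \cref{ode-shift} must be used. The cleanest route is to substitute the expressions \cref{ode-shift} for $s+\X'$ and $s+\Y'$ directly into $f_2$ and $f_4$, which rewrites them as $g'_{st+\X}$ (resp. $g'_{st+\Y}$) times a combination of the periodic fluctuations whose spatial average over the $g'$-weight vanishes; then one estimates the antiderivative $\Fn_2,\Fn_4$ by splitting at the shift location, using $\int_\R g'=1$, $g(-\infty)=0$, $g(+\infty)=1$, and the uniform exponential decay of $\norm{v_{l,r}-\ovl v_{l,r}}_{H^k}$, $\norm{u_{l,r}-\ovl u_{l,r}}_{H^k}$, $\abs{\X'},\abs{\Y'}$. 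For the higher $H^2$ (resp. $H^1$) norms I would differentiate the formulas in $\xi$, noting that each $\p_\xi$ either hits a periodic factor (producing another exponentially small $H^{k-1}(0,\pi)$ term) or hits $g$ (producing $g',g'',g'''$, still integrable and smooth by \cref{Lem-cite}), so no new difficulty arises beyond longer expressions; the $H^3$ smallness of the initial periodic data in $E_0$ is exactly what is needed to control the highest derivatives of $v_{l,r}, u_{l,r}$ via \cref{Lem-periodic} with $k=3$.
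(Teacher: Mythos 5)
Your proposal follows essentially the same route as the paper's proof: decompose each of $\Fn_1,\Fn_2,\Fn_3,\Fn_4$ into a constant/profile part plus a periodic-fluctuation part, use the Rankine--Hugoniot relations, the profile ODE \cref{ode-1}, and $\X_\infty=\Y_\infty$ to reduce the non-decaying constant parts to multiples of $\X'$, $\Y'$ and $g_\X-g_\Y$, and control the $L^2$ norms by splitting at the shift location so that $g$ (resp. $1-g$) supplies integrability at $-\infty$ (resp. $+\infty$). The only loose spot is your attribution of the smallness of $\Fn_4$ to $\int_\R f_4\,dy=0$ rather than to the cancellation $p(\ovlvr)-p(\ovlvl)=s(\ovlur-\ovlul)$, but the substitution of \cref{ode-shift} that you propose delivers exactly that cancellation, so the argument goes through.
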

The proof of \cref{Lem-H} is placed in \cref{Sec-H}.
Now we aim to achieve the following a priori estimates.

\begin{Prop}[A priori estimates]\label{Prop-apriori}
	For any $ T>0, $ assume that $ (\Phi,\Psi) \in \mathcal{B}(0,T) $ is a solution of \cref{equ-Phi-Psi,ic-anti}. Then there exist $ \e_0>0 $ and $ \delta_0>0, $ independent of $ T, $ such that if
	\begin{equation}\label{assum-apriori}
	\e = E_0 <\e_0 \quad \text{and} \quad
	\delta = \sup_{t \in (0,T)} \norm{(\Phi,\Psi)}_2(t) <\delta_0,
	\end{equation}
	then
	\begin{equation}\label{apriori}
	\sup_{t \in (0,T)} \norm{(\Phi,\Psi)}_2^2(t) + \int_{0}^{T} \left(\norm{\p_\xi \Phi}_1^2 + \norm{\p_\xi \Psi}_2^2 \right)(t) d\tau \leq C_0 \big(\norm{\Phi_0,\Psi_0}_2^2 + \e\big),
	\end{equation}
	where $ C_0>0 $ is independent of $ \e,\delta $ and $ T. $
\end{Prop}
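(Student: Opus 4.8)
The plan is a two-stage weighted energy estimate. As a preliminary, note that \cref{assum-apriori} together with $H^1(\R)\hookrightarrow L^\infty(\R)$ keeps $\vn=\vtn+\p_\xi\Phi$ in a fixed compact subinterval of $(0,+\infty)$, so that $p,\sigma$ and their derivatives are bounded along the solution; moreover, by \cref{Lem-periodic,Lem-shift}, the periodic parts of $\vtn,\utn$ and the shift derivatives $\X',\Y'$ are $O(\e e^{-\alpha t})$, while $\p_\xi\vtn=(v^S)'_{st+\X}+O(\e e^{-\alpha t})$ and likewise for $\p_\xi\utn$. In Stage~1 I rewrite the system in terms of the specific volume and the effective velocity, following \cite{He2019a}, to obtain a basic estimate that is uniform in the (possibly large) shock strength; in Stage~2 I return to the Navier--Stokes variables $(\phi,\psi)=(\p_\xi\Phi,\p_\xi\Psi)$ and close the estimate up to second-order derivatives. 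Both stages end with Gronwall's inequality, the factors $e^{-\alpha t}$ supplied by \cref{Lem-H,Lem-shift} being integrable in time.

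For Stage~1, recall that the effective velocity $h=u-\p_x\sigma(v)$ satisfies $\p_th+\p_xp(v)=0$ for \cref{N-S}, and $\ut-\p_x\sigma(\vt)$ satisfies the same equation modulo the error terms of \cref{eq-ansatz}. Accordingly I set $\Wn:=\Psi-\big(\sigma(\vn)-\sigma(\vtn)\big)$, the anti-derivative of the effective-velocity perturbation, and check from \cref{equ-Phi-Psi} that $(\Phi,\Wn)$ solves a system of the form
\[
\begin{cases}
\p_t\Phi-s\,\p_\xi\Phi-\sigma'(\vn)\,\p_\xi^2\Phi=\p_\xi\Wn+R_1,\\
\p_t\Wn-s\,\p_\xi\Wn+p'(\vtn)\,\p_\xi\Phi=-p(\vn|\vtn)+R_2,
\end{cases}
\]
where $R_1=\Hn_1+\big(\sigma'(\vn)-\sigma'(\vtn)\big)\p_\xi\vtn$ (the last term being $O(\abs{\p_\xi\Phi})$ and localized near the shock, hence absorbable by the dissipation) and $R_2$ collects anti-derivatives of the error terms of \cref{eq-ansatz}, with $\norm{\Hn_1},\norm{R_2}\le C\e e^{-\alpha t}$ by \cref{Lem-H,Lem-periodic,Lem-shift}. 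The volume equation is now uniformly parabolic. I multiply it by $-p'(\vtn)\Phi$ (note $-p'>0$ by \cref{pressure}) and the effective-velocity equation by $\Wn$, add, and integrate over $\R$ and in time. The leading terms yield the energy $\int\big((-p'(\vtn))\Phi^2+\Wn^2\big)$, the parabolic dissipation $\int\sigma'(\vn)(-p'(\vtn))(\p_\xi\Phi)^2$, and --- after integrating the transport and cross terms by parts and using $\p_\xi\vtn>0$ near the shock (\cref{Lem-cite}) together with the Lax condition --- a positive shock-weighted quadratic $\int(v^S)'_{st+\X}\,\mathcal{Q}(\Phi,\Wn)$. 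The remaining terms are either nonlinear --- $\int p(\vn|\vtn)\Wn$ is bounded by $C\norm{\Wn}_{L^\infty}\norm{\p_\xi\Phi}^2\le C\delta\norm{\p_\xi\Phi}^2$ via \cref{nonlinear-p} and $\norm{\Wn}_{L^\infty}\le C\norm{\Wn}_1\le C\delta$ --- or come from $R_1,R_2$ and are absorbed by the good terms plus $C\e e^{-\alpha t}(1+\norm{(\Phi,\Wn)}^2)$. With $\delta_0$ small this gives
\[
\sup_{t\in(0,T)}\norm{(\Phi,\Wn)}^2(t)+\int_0^T\Big(\norm{\p_\xi\Phi}^2+\int_\R (v^S)'_{st+\X}\,\mathcal{Q}(\Phi,\Wn)\Big)d\tau\le C\big(\norm{\Phi_0,\Psi_0}^2+\e\big),
\]
and since $\norm{\Psi-\Wn}=\norm{\sigma(\vn)-\sigma(\vtn)}\le C\norm{\p_\xi\Phi}$ the bound also controls $\norm{\Psi}$.

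For Stage~2, with the zeroth-order bound in hand I estimate the derivatives from \cref{equ-phi-psi}. Standard energy identities --- multiplying the $\phi$-equation by $-p'(\vtn)\phi$, the $\psi$-equation by $\psi$, and the once-differentiated pair by $-p'(\vtn)\p_\xi\phi$ and $\p_\xi\psi$ --- give, via the viscous term, the dissipation of $\p_\xi\psi$ and $\p_\xi^2\psi$; differentiating the parabolic volume equation of Stage~1 supplies the dissipation of $\p_\xi\phi$; and the low-order $\psi$-dissipation follows from the identity $\psi=\p_\xi\Wn+\sigma'(\vn)\p_\xi\phi+\big(\sigma'(\vn)-\sigma'(\vtn)\big)\p_\xi\vtn$ together with the bounds already obtained. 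All commutators involving $\p_\xi\vtn,\p_\xi^2\vtn,\p_\xi\utn$ are either absorbed into the dissipation (using $\norm{(v^S)'}_{L^1(\R)}<\infty$ and $\delta$ small) or are $O(\e e^{-\alpha t})$; the terms with $\Hn_1,\Hn_2$ and their derivatives are handled by \cref{Lem-H}. Adding the zeroth-, first- and second-order estimates with suitable small weights, absorbing the crossed and cubic terms by the smallness of $\e$ and $\delta$, and applying Gronwall once more yields \cref{apriori}.

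The main obstacle is twofold. First, the shock amplitude need not be small, so the energy method applied directly to \cref{equ-Phi-Psi} does not produce a coercive functional; this is precisely why Stage~1 is carried out in the effective-velocity variables, where the uniform parabolicity of the volume equation combines with $(v^S)'>0$ and $-p'>0$ to give positivity independently of the strength. Second, the perturbation oscillates at $x=\pm\infty$, so no source term in \cref{source} is compactly supported: one must systematically separate the spatially localized ``shock part'' (carried by $g'_{st+\X},g'_{st+\Y}$) from the ``periodic part'' (which decays only in time, by \cref{Lem-periodic,Lem-shift}) and verify that the latter always appears against a coefficient producing an integrable-in-time $e^{-\alpha t}$. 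The most delicate single point is recovering the full dissipation $\int_0^T\big(\norm{\p_\xi\Phi}_1^2+\norm{\p_\xi\Psi}_2^2\big)$: since the volume equation of \cref{N-S} is non-dissipative, every $\p_\xi^k\phi$-dissipation has to be extracted from the effective-velocity reformulation, which in turn forces one to keep the extra commutators $\big(\sigma'(\vn)-\sigma'(\vtn)\big)\p_\xi^k\vtn$ under control at each order.
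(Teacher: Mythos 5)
Your overall architecture coincides with the paper's: Stage~1 is the effective-velocity reformulation $\Wn=\Psi-\big(\sigma(\vn)-\sigma(\vtn)\big)$ with a symmetrized energy estimate whose good sign comes from $p''>0$ and $(u^S)'<0$ (your multipliers $-p'(\vtn)\Phi$ and $\Wn$ are the paper's $\Phi$ and $-\Wn/p'(\vtn)$ up to the common factor $-p'(\vtn)$), and Stage~2 returns to the original variables to harvest the viscous dissipation of $\p_\xi\psi$ and $\p_\xi^2\psi$; this is precisely the scheme of Lemmas \ref{Lem-esti-0}--\ref{Lem-Psi}. One presentational caveat on Stage~1: the zeroth-order bound cannot be closed in the form you display, because the cubic commutator containing $\p_\xi^2\Phi$ (the $\p_\xi^2\Phi$ part of the paper's $\Qn$, which reappears in your integration by parts of $\sigma'(\vn)\p_\xi^2\Phi$ against $-p'(\vtn)\Phi$) leaves a term $C\delta\int_0^T\norm{\p_\xi^2\Phi}^2\,dt$ on the right-hand side; it is absorbed only after adding the first-order $(\Phi,\Wn)$ estimate, which you do perform, so this is not a real obstruction.

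The genuine gap is in Stage~2, in your derivation of the lowest-order piece of $\int_0^T\norm{\p_\xi\Psi}_2^2\,dt$. You propose to obtain $\int_0^T\norm{\psi}^2\,dt$ from the identity $\psi=\p_\xi\Wn+\sigma'(\vn)\p_\xi\phi+\big(\sigma'(\vn)-\sigma'(\vtn)\big)\p_\xi\vtn$ ``together with the bounds already obtained''. But Stage~1 controls $\p_\xi\Wn$ in $L^2_tL^2_\xi$ only against the weight $\sqrt{-(u^S_\Y)'}$, which decays exponentially as $\abs{\xi}\to\infty$; the unweighted quantity $\int_0^T\norm{\p_\xi\Wn}^2\,dt$ is \emph{not} controlled, because the $\Wn$-equation is a pure transport equation with no viscous term, so its only dissipation is the degenerate shock-weighted one. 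Hence the identity does not yield $\int_0^T\norm{\psi}^2\,dt$, and without it the conclusion \cref{apriori} is missing the term $\int_0^T\norm{\p_\xi\Psi}^2\,dt$. The paper fills exactly this hole with one more energy identity on the antiderivative momentum equation: multiplying \cref{equ-Phi-Psi}$_2$ by $-\Psi/p'(\vtn)$ and \cref{equ-Phi-Psi}$_1$ by $\Phi$, the viscous term $\sigma'(\vn)\p_\xi^2\Psi$ integrates by parts into the unweighted dissipation $\int\frac{\sigma'(\vn)}{\abs{p'(\vtn)}}\abs{\p_\xi\Psi}^2\,d\xi$, while the dangerous zeroth-order term again carries the favorable sign $-(u^S_\Y)'\Psi^2\geq0$. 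With that step inserted, the remainder of your plan (the $\psi$, $\p_\xi\psi$ and $\p_\xi\phi$ estimates and the final summation with small weights) goes through as in the paper.
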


\subsection{Proof of \cref{Prop-apriori}}
In order to deal with the large-amplitude shock, we first follow the idea of \cite{He2019a} to consider the so-called effective velocity, $ \nn:= \un -\p_\xi \sigma(\vn), $ instead of the velocity $ \un, $ to obtain the energy estimate of lower-order terms; see Lemmas \ref{Lem-esti-0} and \ref{Lem-esti-1}. Then we turn back to \cref{equ-Phi-Psi} to obtain the higher-order estimate; see \cref{Lem-Psi}.

Define $ \ntn := \utn -\p_\xi \sigma(\vtn), $
and denote the perturbation term as $ \wn = \nn-\ntn, $ with the anti-derivative variable
\begin{equation}\label{W-def}
	\Wn(\xi,t) := \int_{-\infty}^{\xi} \wn(y,t) dy = \Psi(\xi,t) - \big(\sigma(\vn)-\sigma(\vtn)\big)(\xi,t).
\end{equation}
With this new variable, it follows from \cref{N-S,eq-ansatz} that
\begin{equation}\label{equ-anti}
	\begin{cases}
		\p_t \Phi - s \p_\xi \Phi - \p_\xi \Wn - \sigma'(\vtn) \p_\xi^2 \Phi = \Qn + \Hn_1, &\\
		\p_t \Wn - s\p_\xi \Wn + p'(\vtn) \p_\xi \Phi = - p(\vn|\vtn) +  \Hn_2-\sigma'(\vtn) \p_\xi \Hn_1,
	\end{cases}
\end{equation}
where $ \Qn $ is a nonlinear term given by
\begin{align}
	\Qn & := \left(\sigma'(\vn)-\sigma'(\vtn) \right) \left( \p_\xi \vtn + \p_\xi^2 \Phi \right). \label{Q}
\end{align}
The initial data of $ \Wn $ is 
\begin{equation*}
	\Wn_0(\xi) := \Wn(\xi,0) = \Psi_0(\xi) - \big(\sigma(\vn)-\sigma(\vtn)\big)(\xi,0) \in H^1(\R),
\end{equation*}
which satisfies that $ \norm{\Wn_0}_{H^1(\R)} \leq \norm{ \Psi_0}_{H^1(\R)} + C \norm{\Phi_0}_{H^2(\R)} \leq C\e. $

\vspace{0.2cm}

Under the assumption that $ \e_0 $ and $ \delta_0 $ are both small enough, it follows from \cref{assum-apriori} and the Sobolev inequality that 
\begin{equation}\label{bdd-apriori}
	\begin{aligned}
	& \sup_{t \in (0,T)} \norm{\Phi,\Psi}_{W^{1,\infty}(\R)} \leq C \sup_{t \in (0,T)} \norm{\Phi,\Psi}_2 \leq C \delta_0, \\
	& \sup_{t \in (0,T)} \norm{\Wn}_{L^\infty(\R)} \leq C \sup_{t \in (0,T)} \norm{\Wn}_1 \leq C\delta_0,
	\end{aligned}	
\end{equation}
and $ \ovlvl/2 \leq \vtn(\xi,t) \leq 2 \ovlvr, \quad \ovlvl/4 \leq \vn(\xi,t) \leq 4 \ovlvr $ for $ \xi\in \R $ and $ t\in [0,T]. $

\vspace{0.3cm}

\begin{Lem}\label{Lem-esti-0}
	Under the assumptions of \cref{Prop-apriori}, there exist $ \e_0>0 $ and $ \delta_0>0 $ such that if $ \e<\e_0 $ and $ \delta<\delta_0, $ then
	\begin{equation}\label{esti-0}
	\begin{aligned}
	& \sup_{t \in (0,T)} \norm{(\Phi,\Wn)}^2(t) + \int_{0}^{T} \Big( \norm{\p_\xi \Phi}^2 + \norm{\sqrt{-\left(u^S_\Y \right)'} ~ \Wn}^2 \Big)(t) dt \\
	& \qquad \leq C \Big( \norm{\Phi_0, \Wn_0}^2 + \e + \delta \int_{0}^{T} \norm{\p_\xi^2 \Phi}^2(t) dt \Big),
	\end{aligned}  
	\end{equation}
	where $ C>0 $ is independent of $ \e,\delta $ and $ T. $
\end{Lem}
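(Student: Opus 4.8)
The plan is to run a weighted energy estimate directly on the effective-velocity antiderivative system \cref{equ-anti}, following the scheme of \cite{He2019a} tailored to large shock amplitudes. Since \cref{bdd-apriori} keeps $\vtn$ in a fixed compact subset of $(0,\infty)$, one has $-p'(\vtn)\ge c_0>0$ and $\sigma'(\vtn)\ge c_0>0$, so the natural energy $\int_\R\bigl(\tfrac12\Phi^2+\tfrac{1}{2(-p'(\vtn))}\Wn^2\bigr)\,d\xi$ is equivalent to $\norm{(\Phi,\Wn)}^2$. A natural choice is to multiply the first equation of \cref{equ-anti} by $\Phi$ and the second by $\Wn/(-p'(\vtn))$ and add. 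The decisive feature of the effective velocity is that the first equation is then parabolic in $\Phi$: integrating $\sigma'(\vtn)\p_\xi^2\Phi$ by parts produces the full dissipation $\int_\R\sigma'(\vtn)(\p_\xi\Phi)^2\,d\xi\gtrsim\norm{\p_\xi\Phi}^2$. With these multipliers the skew-symmetric coupling collapses into the pure divergence $\p_\xi(\Phi\Wn)$ and drops out, and $-s\p_\xi\Phi$ also integrates to zero; only the convective term $-s\p_\xi\Wn$, tested against the weight $1/(-p'(\vtn))$, survives and supplies the shock-layer dissipation $\tfrac s2\int_\R\tfrac{p''(\vtn)\,\p_\xi\vtn}{p'(\vtn)^2}\,\Wn^2\,d\xi$. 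Splitting $\p_\xi\vtn=(v^S)'(\xi-\X(t))+R$ with $\norm R_{L^\infty}\le C\e e^{-\alpha t}$ (by \cref{Lem-periodic,Lem-shift}) and using $(u^S)'=-s(v^S)'$ with $p''>0$ on the compact range of $v^S$, this is bounded below by $c\norm{\sqrt{-(u^S_\Y)'}\,\Wn}^2-C\e e^{-\alpha t}\norm{\Wn}^2$, the gap $\abs{\X-\Y}\le Ce^{-\alpha t}$ being controlled by \cref{Lem-shift}.

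The main obstacle is that, because the shock amplitude is not small, the remaining shock-layer contributions carry $O(1)$ constants: these come from integrating the variable viscosity by parts (a term $\int_\R\sigma''(\vtn)(v^S)'(\xi-\X(t))\,\Phi\,\p_\xi\Phi\,d\xi$) and from the $\p_\xi\vtn$ part of the nonlinearity $\Qn$, and there is no $\Phi^2$-dissipation freely available to absorb them. This is exactly the difficulty that the effective-velocity analysis of \cite{He2019a} is designed to resolve: one adds to the energy a bounded, monotone weight built from the profile $v^S(\xi-\X(t))$ so that the convective term manufactures an extra shock-layer dissipation $\sim\int_\R(v^S)'\,\Phi^2\,d\xi$, and then — using $(v^S)'>0$, $p''>0$ and the profile equations \cref{ode-1} — one checks that the resulting shock-layer quadratic form in $\bigl(\Phi,\p_\xi\Phi,\Wn\bigr)$ is coercive with constants \emph{independent of the amplitude}. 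Proving this amplitude-uniform coercivity is where the real work, and the large-amplitude hypothesis, enter; the remaining $R$-part of $\p_\xi\vtn$ is harmless since $\norm R_{L^\infty}\le C\e e^{-\alpha t}$.

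Everything else is lower order. Differentiating the weight in time costs at most $C\norm{\p_t\vtn}_{L^\infty}\norm{\Wn}^2\le C\e e^{-\alpha t}\norm{\Wn}^2$, using $\norm{\p_t\vtn}_{L^\infty}\le C\e e^{-\alpha t}$ from \cref{Lem-periodic,Lem-shift}; all such terms, after integration in time, are absorbed by $\sup_{(0,T)}\norm{(\Phi,\Wn)}^2$ and the dissipation once $\e_0$ is small. The error terms satisfy $\norm{\Hn_1}_2,\norm{\Hn_2}_1\le C\e e^{-\alpha t}$ by \cref{Lem-H}, so pairing $\Hn_1$, $\Hn_2$ and $\sigma'(\vtn)\p_\xi\Hn_1$ with $\Phi$, $\Wn$ and using Cauchy--Schwarz in $\xi$ and in $t$ yields a bound $\le\tfrac12\sup_{(0,T)}\norm{(\Phi,\Wn)}^2+C\e$. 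Finally \cref{bdd-apriori} gives $\norm{(\Phi,\Psi)}_{W^{1,\infty}}+\norm{\Wn}_{L^\infty}\le C\delta$, so the genuinely nonlinear terms $-p(\vn|\vtn)=O(\abs{\p_\xi\Phi}^2)$ (cf.\ \cref{nonlinear-p}) and the $\p_\xi^2\Phi$-part of $\Qn=(\sigma'(\vn)-\sigma'(\vtn))(\p_\xi\vtn+\p_\xi^2\Phi)$ (cf.\ \cref{Q}) contribute at most $C\delta\bigl(\norm{\p_\xi\Phi}^2+\norm{\p_\xi^2\Phi}^2\bigr)$; the first part is absorbed into the parabolic dissipation, and $C\delta\int_0^T\norm{\p_\xi^2\Phi}^2\,dt$ is carried to the right-hand side — precisely the term appearing in \cref{esti-0}, to be eliminated later together with the higher-order estimates. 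Integrating the differential inequality over $(0,T)$, with $\norm{\Wn_0}_{H^1}\le\norm{\Psi_0}_{H^1}+C\norm{\Phi_0}_{H^2}\le C\e$ for the initial energy, gives \cref{esti-0}.
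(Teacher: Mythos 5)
Your overall scheme coincides with the paper's: the same multipliers $\Phi$ and $\Wn/(-p'(\vtn))$ on \cref{equ-anti}, the full $\norm{\p_\xi\Phi}^2$ dissipation from the parabolic structure of the first equation, the $\Wn$-dissipation with weight $-(u^S_\Y)'$ produced by the convection acting on $1/\abs{p'(\vtn)}$ together with $p''>0$ and $(u^S)'<0$ (the paper packages this via $\p_t\vtn-s\p_\xi\vtn=\p_\xi\utn-\p_\xi\Hn_1$ and $\p_\xi\utn\approx(u^S_\Y)'$, which is equivalent to your splitting), the collapse of the coupling into $\p_\xi(\Phi\Wn)$, and the treatment of $\Hn_1,\Hn_2$ and of the quadratic parts of $p(\vn|\vtn)$ and $\Qn$ by $C\e$ and $C\delta\int_0^T(\norm{\p_\xi\Phi}^2+\norm{\p_\xi^2\Phi}^2)\,dt$.

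However, there is a genuine gap at the step you yourself single out as decisive. You assert that the $O(1)$ shock-layer terms --- $\int_\R\sigma''(\vtn)\p_\xi\vtn\,\Phi\,\p_\xi\Phi\,d\xi$ from integrating the variable viscosity by parts, and the $\p_\xi\vtn$-part of $\Qn\Phi$ --- must be absorbed by an extra dissipation $\sim\int_\R(v^S)'\Phi^2\,d\xi$ manufactured by a monotone weight whose amplitude-uniform coercivity you defer to \cite{He2019a} without constructing the weight or verifying anything. This both leaves the argument incomplete and misidentifies the mechanism: the two terms cancel each other to leading order. Indeed, since $\sigma'(\vn)-\sigma'(\vtn)=\sigma''(\vtn)\p_\xi\Phi+O(\abs{\p_\xi\Phi}^2)$, one has
\begin{equation*}
\Qn-\sigma''(\vtn)\p_\xi\vtn\,\p_\xi\Phi=\sigma''(\vtn)\p_\xi\Phi\,\p_\xi^2\Phi+O\big(\abs{\p_\xi\Phi}^2\big)\big(1+\abs{\p_\xi^2\Phi}\big),
\end{equation*}
so after pairing with $\Phi$ the combined contribution is bounded by $C\delta\int_0^T(\norm{\p_\xi\Phi}^2+\norm{\p_\xi^2\Phi}^2)\,dt$, exactly like the other nonlinear terms (this is the term $I_{1,3}$ in the paper's identity); no weight and no coercivity lemma are needed. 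Had the cancellation not occurred your worry would be real, since a localized $\int_\R(v^S)'\Phi^2\,d\xi$ term has no dissipation counterpart at this stage; but the weighted-energy route you sketch is precisely the device that degenerates for large amplitudes and that the effective-velocity formulation is designed to avoid. As written, the central step of your proposal is therefore both unexecuted and unnecessary.
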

\begin{proof}
	Multiplying $ \Phi $ on \cref{equ-anti}$_1 $ and $ - \frac{ \Wn}{p'(\vtn)} $ on \cref{equ-anti}$_2 $ yields that
	\begin{equation}\label{equality-1}
	\begin{aligned}
	& \p_t \Big( \frac{\Phi^2}{2} + \frac{ \Wn^2}{2\abs{p'(\vtn)}}  \Big) + \sigma'(\vtn) \left(\p_\xi \Phi \right)^2 - \frac{p''(\vtn)}{2 \left(p'(\vtn)\right)^2}\left(u^S_\Y\right)'  \Wn^2  \\
	& \quad =  \underbrace{ \frac{p''(\vtn)}{2(p'(\vtn))^2} \big( \p_\xi \utn - \left(u^S_\Y\right)' - \p_\xi \Hn_1 \big) }_{I_{1,1}} \Wn^2 + \underbrace{\Hn_1 \Phi - \frac{ \Wn}{p'(\vtn)} \big(\Hn_2-\sigma'(\vtn) \p_\xi \Hn_1\big)}_{I_{1,2}} \\
	& \qquad  + \underbrace{ \big( \Qn - \sigma''(\vtn)\p_\xi\vtn \p_\xi \Phi \big) \Phi + \frac{ p(\vn|\vtn)}{p'(\vtn)} \Wn}_{I_{1,3}} + \p_\xi \{\cdots\},
	\end{aligned}
	\end{equation}
	where $ \{\cdots\} = \frac{s}{2} \Phi^2 + \Phi \Wn + \sigma'(\vtn) \Phi \p_\xi\Phi - \frac{s}{2p'(\vtn)}  \Wn^2. $
	Since $ \utn(\xi,t) = \un_l(\xi,t) (1-g_{\Y}(\xi)) + \un_r(\xi,t) g_\Y(\xi) , $ it follows from Lemmas \ref{Lem-periodic} and \ref{Lem-shift} that
	\begin{align}
	\norm{\p_\xi \utn-\left(u^S_\Y\right)'}_{L^\infty(\R)} & \leq C \left( \norm{\p_\xi \un_l}_{L^\infty(\R)} + \norm{\p_\xi \un_r}_{L^\infty(\R)} + \norm{\un_l-\ovlul}_{L^\infty(\R)}  + \norm{\un_r-\ovlur}_{L^\infty(\R)} \right) \notag \\
	& \leq C\e e^{-\alpha t}. \label{term-1}
	\end{align}
	This, together with \cref{Lem-H}, yields that 
	\begin{equation}\label{I-1-1}
		\norm{I_{1,1}}_{L^\infty(\R)} \leq C\e e^{-\alpha t}. 
	\end{equation}
	It also follows from \cref{Lem-H} that
	\begin{equation}\label{I-1-2}
	\begin{aligned}
	\int_0^T \int_\R \abs{I_{1,2}} d\xi dt & \leq C \int_0^T \left[ \norm{\Hn_1} \norm{\Phi} + \big(\norm{\Hn_2}+\norm{\p_\xi \Hn_1}\big) \norm{ \Wn} \right] dt \\
	& \leq C \e \int_0^T e^{-\alpha t} \big(\norm{\Phi} + \norm{ \Wn} \big) dt \\
	& \leq C \e \sup_{t \in (0,T)} \big(\norm{\Phi}^2 + \norm{ \Wn}^2 \big) + C\e.
	\end{aligned}		
	\end{equation}
	By \cref{nonlinear-p,Q}, one has that $ 0\leq p(\vn,\vtn) \leq C\abs{\p_\xi \Phi}^2 $ and 
	$$ \abs{\Qn - \sigma''(\vtn)\p_\xi\vtn \p_\xi \Phi} \leq C \left( \abs{\p_\xi\Phi}\abs{\p_\xi^2 \Phi} + \abs{\p_\xi\Phi}^2 \right). $$ 
	Then by \cref{bdd-apriori}, the nonlinear term $ I_{1,3} $ satisfies that
	\begin{align*}
		\int_0^T \int_\R \abs{I_{1,3}} d\xi dt & \leq C \int_0^T \int_\R \left[ \norm{\Phi}_{L^\infty(\R)} \left( \abs{\p_\xi\Phi}\abs{\p_\xi^2 \Phi} + \abs{\p_\xi\Phi}^2 \right) + \norm{ \Wn}_{L^\infty(\R)} \abs{\p_\xi \Phi}^2 \right] d\xi dt \\
		& \leq C \delta \int_{0}^{T} \norm{\p_\xi \Phi}^2 dt + C \delta \int_{0}^{T} \norm{\p_\xi^2 \Phi}^2 dt.
	\end{align*}
	Collecting the estimates from $ I_{1,1} $ to  $ I_{1,3}, $ and using the fact that $ (u^S)'<0, $ one can integrate \cref{equality-1} over $ \R\times(0,T) $ to get that
	\begin{equation*}
	\begin{aligned}
	& \sup_{t \in (0,T)} \norm{\Phi, \Wn}^2 + \int_0^T \Big(\norm{\p_\xi \Phi}^2 + \norm{\sqrt{-(u^S_\Y)'} ~ \Wn}^2 \Big) dt \\
	& \quad \leq C\norm{\Phi_0, \Wn_0}^2 + C\e \sup_{t \in (0,T)} \norm{\Phi, \Wn}^2 + C\e + C \delta \int_{0}^{T} \norm{\p_\xi \Phi}^2 dt + C \delta \int_{0}^{T} \norm{\p_\xi^2 \Phi}^2 dt. 
	\end{aligned}
	\end{equation*}
	Thus, if $ \e_0>0 $ and $ \delta_0>0 $ are small enough, one can obtain \cref{esti-0}.
\end{proof}	

\vspace{0.05cm}
	
\begin{Lem}\label{Lem-esti-1}
	Under the assumptions of \cref{Prop-apriori}, there exist $ \e_0>0 $ and $ \delta_0>0 $ such that if $ \e<\e_0 $ and $ \delta<\delta_0, $ then
	\begin{equation}\label{esti-1}
	\begin{aligned}
	& \sup_{t \in (0,T)} \norm{\p_\xi (\Phi,\Wn)}^2(t) + \int_{0}^{T} \Big(\norm{\p_\xi^2 \Phi}^2(t) + \norm{\sqrt{-(u^S_\Y)'} \p_\xi \Wn}^2 \Big)(t) dt \\
	&\quad \leq C\big(\norm{\Phi_0, \Wn_0}_1^2 + \e\big),
	\end{aligned}  
	\end{equation}
	where $ C>0 $ is independent of $ \e,\delta $ and $ T. $
\end{Lem}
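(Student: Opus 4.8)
The plan is to mimic the proof of \cref{Lem-esti-0} one order higher and then close the estimate by inserting \cref{esti-0}. Concretely, I would multiply \cref{equ-anti}$_1$ by $-\p_\xi^2\Phi$ and the $\xi$-derivative of \cref{equ-anti}$_2$ by $-\p_\xi\Wn/p'(\vtn)$, add the two resulting identities, and integrate over $\R\times(0,T)$. Multiplying \cref{equ-anti}$_1$ by $-\p_\xi^2\Phi$ produces (in differential form, after moving one $\xi$-derivative off $\p_t\Phi$) the time density $\tfrac12(\p_\xi\Phi)^2$, the good dissipation $\sigma'(\vtn)(\p_\xi^2\Phi)^2\geq c_0(\p_\xi^2\Phi)^2$ — here $c_0>0$ since $\sigma'(\vtn)=\mu(\vtn)/\vtn$ and $\ovlvl/2\leq\vtn\leq2\ovlvr$ — the cross term $\p_\xi\Wn\,\p_\xi^2\Phi$, the exact $\xi$-derivative $\tfrac s2\p_\xi((\p_\xi\Phi)^2)$, and the forcing $-(\Qn+\Hn_1)\p_\xi^2\Phi$. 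Differentiating \cref{equ-anti}$_2$ and multiplying by $-\p_\xi\Wn/p'(\vtn)$ produces the time density $\tfrac{(\p_\xi\Wn)^2}{2\abs{p'(\vtn)}}$; the piece $p'(\vtn)\p_\xi^2\Phi$ inside $\p_\xi(p'(\vtn)\p_\xi\Phi)$ produces $-\p_\xi^2\Phi\,\p_\xi\Wn$, which cancels exactly the cross term from \cref{equ-anti}$_1$; and, gathering the contributions of $\p_t\p_\xi\Wn$ and $-s\p_\xi^2\Wn$ exactly as in the derivation of \cref{equality-1} and using $\p_t\vtn-s\p_\xi\vtn=\p_\xi\utn-\p_\xi\Hn_1$, one gets the damping term $-\tfrac{p''(\vtn)}{2(p'(\vtn))^2}(\p_\xi\utn-\p_\xi\Hn_1)(\p_\xi\Wn)^2$, whose leading part $-\tfrac{p''(\vtn)}{2(p'(\vtn))^2}(u^S_\Y)'(\p_\xi\Wn)^2\geq0$ yields the degenerate dissipation $\norm{\sqrt{-(u^S_\Y)'}\,\p_\xi\Wn}^2$. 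This reproduces the left-hand side of \cref{esti-1}.

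For the right-hand side, the terms involving $\Hn_1,\Hn_2$ (i.e.\ $\int\Hn_1\p_\xi^2\Phi$, $\int\p_\xi\Hn_2\,\p_\xi\Wn$, $\int\p_\xi(\sigma'(\vtn)\p_\xi\Hn_1)\,\p_\xi\Wn$, and the $\p_\xi\Hn_1$-remainder of the damping term) are treated as $I_{1,2}$ was in \cref{Lem-esti-0}: by \cref{Lem-H} each carries a factor $\e e^{-\alpha t}$ — using $\norm{\p_\xi(\sigma'(\vtn)\p_\xi\Hn_1)}\leq C\norm{\Hn_1}_2\leq C\e e^{-\alpha t}$ since $\norm{\p_\xi\vtn}_{L^\infty(\R)}$ is bounded — so after integrating in $t$ they are dominated by $C\e\sup_{(0,T)}\norm{\p_\xi(\Phi,\Wn)}^2+C\e$ plus, after one Young's inequality on $\int\Hn_1\p_\xi^2\Phi$, a small multiple of $\int_0^T\norm{\p_\xi^2\Phi}^2\,dt$. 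The ``$I_{1,1}$-type'' remainder $-\tfrac{p''(\vtn)}{2(p'(\vtn))^2}(\p_\xi\utn-(u^S_\Y)')(\p_\xi\Wn)^2$ is $\leq C\e e^{-\alpha t}(\p_\xi\Wn)^2$ by \cref{term-1}. The genuinely nonlinear terms — those from $\Qn=(\sigma'(\vn)-\sigma'(\vtn))(\p_\xi\vtn+\p_\xi^2\Phi)$, from $p(\vn|\vtn)$ with $\p_\xi p(\vn|\vtn)=O((\p_\xi\Phi)^2)\,\p_\xi\vtn+O(\p_\xi\Phi\,\p_\xi^2\Phi)$, and the surviving cross term $-\tfrac{p''(\vtn)}{p'(\vtn)}\p_\xi\vtn\,\p_\xi\Phi\,\p_\xi\Wn$ — are estimated with \cref{bdd-apriori}: each factor $\sigma'(\vn)-\sigma'(\vtn)$, $p'(\vn)-p'(\vtn)$ is $O(\abs{\p_\xi\Phi})\leq C\delta$ in $L^\infty(\R)$, so the cubic terms quadratic in $(\p_\xi^2\Phi,\p_\xi\Wn)$ contribute at most $C\delta$ times the good terms, and those linear in $\p_\xi^2\Phi$ reduce by Young's inequality to $\eta\norm{\p_\xi^2\Phi}^2+C_\eta\norm{\p_\xi\Phi}^2$.

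The main obstacle is the large-amplitude feature: $\p_\xi\vtn$ and $\p_\xi^2\vtn$ are only $O(1)$, not small, so the terms $\int(\p_\xi\Phi)^2\,\p_\xi\vtn\,\p_\xi\Wn$ (from $\p_\xi p(\vn|\vtn)$) and $\int\p_\xi\vtn\,\p_\xi\Phi\,\p_\xi\Wn$ cannot be closed against a genuinely small coefficient, and the only dissipation available for $\p_\xi\Wn$ is the degenerate weight. I would handle these by splitting $\p_\xi\vtn=(v^S_\X)'+R_v$, where $\norm{R_v}_{L^\infty(\R)}\leq C\e e^{-\alpha t}$ by \cref{g}, \cref{Lem-periodic} and \cref{Lem-shift} (so the $R_v$-part is harmless), and, for the $(v^S_\X)'$-part, by applying the weighted Cauchy--Schwarz inequality
\begin{equation*}
	\int_\R (v^S_\X)'\,\abs{\p_\xi\Phi}\,\abs{\p_\xi\Wn}\,d\xi\leq\eta\int_\R\big(-(u^S_\Y)'\big)(\p_\xi\Wn)^2\,d\xi+C_\eta\int_\R\frac{\big((v^S_\X)'\big)^2}{-(u^S_\Y)'}\,(\p_\xi\Phi)^2\,d\xi,
\end{equation*}
together with the elementary bound $\big((v^S_\X)'\big)^2/\big(-(u^S_\Y)'\big)\leq C$ uniformly in $(\xi,t)$, which holds since $-(u^S)'=s(v^S)'$, the profile derivative $(v^S)'$ decays exponentially at $\pm\infty$, and $\abs{\X(t)-\Y(t)}\leq C\e$ by \cref{Lem-shift}; this reduces the contribution to $\eta\norm{\sqrt{-(u^S_\Y)'}\,\p_\xi\Wn}^2+C_\eta\norm{\p_\xi\Phi}^2$. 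Collecting everything and inserting the bound \cref{esti-0} for $\int_0^T\norm{\p_\xi\Phi}^2\,dt$ (which costs an extra $C\delta\int_0^T\norm{\p_\xi^2\Phi}^2\,dt$), I would first choose $\eta$ small depending only on $c_0$ and on the lower bound of $\tfrac{p''(\vtn)}{2(p'(\vtn))^2}$, then $\delta_0$ small so that the total coefficients of $\int_0^T\norm{\p_\xi^2\Phi}^2\,dt$ and $\int_0^T\norm{\sqrt{-(u^S_\Y)'}\,\p_\xi\Wn}^2\,dt$ on the right stay below those on the left, and finally $\e_0$ small; then all the remainders $\int_0^T\norm{\p_\xi^2\Phi}^2$, $\int_0^T\norm{\p_\xi\Phi}^2$ and $\sup_{(0,T)}\norm{\p_\xi(\Phi,\Wn)}^2$ are absorbed into the left-hand side, giving \cref{esti-1}.
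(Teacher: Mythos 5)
Your proposal is correct and follows essentially the same route as the paper: the same multipliers $ \p_\xi^2\Phi $ and $ -\p_\xi\Wn/p'(\vtn) $, the same splitting $ \p_\xi\vtn=(v^S_\X)'+O(\e e^{-\alpha t}) $ for the surviving cross term, the same weighted Cauchy--Schwarz against the degenerate dissipation $ \norm{\sqrt{-(u^S_\Y)'}\,\p_\xi\Wn}^2, $ and the same closure by feeding \cref{esti-0} back in to absorb $ \delta\int_0^T\norm{\p_\xi^2\Phi}^2dt. $ The only cosmetic difference is that the paper treats the cross term by writing $ (v^S_\X)'=\tfrac1s\abs{(u^S_\X)'} $ and using $ \norm{(u^S_\X)'-(u^S_\Y)'}_{L^\infty(\R)}\le C\e e^{-\alpha t} $ from \cref{term-2} (which relies on $ \X_\infty=\Y_\infty $), rather than your pointwise ratio bound $ ((v^S_\X)')^2/(-(u^S_\Y)')\le C $; both are valid.
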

\begin{proof}
	Multiplying $ \p_\xi^2 \Phi $ on both sides of \cref{equ-anti}$ _1, $ and  differentiating \cref{equ-anti}$ _2 $ with respect to $ \xi $ and multiplying $ - \frac{\p_\xi  \Wn}{p'(\vtn)} $ on the resulting equation, one has that
	\begin{equation}\label{equ-2}
	\begin{aligned}
	& \p_t \Big(\frac{\abs{\p_\xi\Phi}^2}{2}  + \frac{ \abs{\p_\xi \Wn}^2}{2\abs{p'(\vtn)}} \Big) + \sigma'(\vtn) \abs{\p_\xi^2 \Phi}^2 - \frac{p''(\vtn)}{2\left(p'(\vtn)\right)^2} \left(u^S_\Y\right)' \abs{\p_\xi \Wn}^2  \\
	& \quad = \p_\xi \{\cdots\} + I_{1,1} \abs{\p_\xi \Wn}^2 \\
	& \qquad + \underbrace{\frac{p''(\vtn)}{p'(\vtn)} \big(\p_\xi\vtn - \left(v^S_\X\right)' \big)}_{I_{2,2}} \p_\xi\Phi \p_\xi\Wn + \underbrace{\frac{p''(\vtn)}{p'(\vtn)} \left(v^S_\X\right)' \p_\xi\Phi \p_\xi\Wn}_{I_{2,3}} \\
	& \qquad \underbrace{ - \Qn \p_\xi^2 \Phi + \p_\xi p(\vn|\vtn) \frac{\p_\xi  \Wn}{p'(\vtn)}}_{I_{2,4}} \underbrace{- \Hn_1 \p_\xi^2 \Phi - \p_\xi \big(\Hn_2-\sigma'(\vtn) \p_\xi \Hn_1\big) \frac{\p_\xi  \Wn}{p'(\vtn)}}_{I_{2,5}},
	\end{aligned}
	\end{equation}
	where $ \{\cdots\} = \p_t \Phi \p_\xi \Phi - \frac{s}{2} (\p_\xi \Phi)^2 - \frac{s}{2p'(\vtn)} (\p_\xi \Wn)^2, $ and $ I_{1,1} $ is the term given in \cref{equality-1}, satisfying \cref{I-1-1}.
	Similar to \cref{term-1}, one can prove that $ \norm{I_{2,2}}_{L^\infty(\R)} \leq C\e e^{-\alpha t}. $ It follows from \cref{ode-1} and \cref{Lem-shift} with $ \X_\infty = \Y_\infty $ that
	\begin{equation}\label{term-2}
		\norm{s(v^S_\X)' - (u^S_\Y)'}_{L^\infty(\R)} = \norm{(u^S_\X)' - (u^S_\Y)'}_{L^\infty(\R)} \leq C \abs{\X(t)-\Y(t)} \leq C \e e^{-\alpha t}.
	\end{equation}
	Thus, $ I_{2,3} $ satisfies that
	\begin{align*}
		\int_0^T \int_\R \abs{I_{2,3}} d\xi dt & \leq C \int_{0}^{T} \int_\R \abs{(u^S_\Y)'} \abs{\p_\xi\Phi} \abs{\p_\xi \Wn} d\xi dt + C\e \int_0^T e^{-\alpha t} \norm{\p_\xi\Phi} \norm{\p_\xi \Wn} dt \\
		& \leq \frac{a_0}{2} \int_0^T \norm{\sqrt{-(u^S_\Y)'} \p_\xi\Wn}^2 dt + C \int_{0}^{T} \norm{\p_\xi \Phi}^2 dt + C\e \sup_{t \in (0,T)} \norm{\p_\xi\Wn}^2,
	\end{align*}
	where $ a_0 = \inf \frac{p''(\vtn)}{2(p'(\vtn))^2} >0. $
	By \cref{nonlinear-p,Q}, one has that
	$ \abs{\p_\xi p(\vn|\vtn)} \leq C \left( \abs{\p_\xi \Phi}^2 + \abs{\p_\xi \Phi} \abs{\p_\xi^2 \Phi} \right) $ and $ \abs{\Qn} \leq C \left( \abs{\p_\xi\Phi} + \abs{\p_\xi\Phi}\abs{\p_\xi^2 \Phi} \right). $ Thus, it follows from \cref{bdd-apriori} and Sobolev inequality that
	\begin{align*}
	\int_0^T \int_\R \abs{I_{2,4}} d\xi dt & \leq C\int_0^T \int_\R  \left( \abs{\p_\xi\Phi} +  \norm{\p_\xi \Phi}_{L^\infty(\R)}\abs{\p_\xi^2 \Phi} \right)  \abs{\p_\xi^2 \Phi} d\xi dt \\
	& \quad + C\int_0^T \int_\R \norm{\p_\xi  \Phi}_{L^\infty(\R)} \left( \abs{\p_\xi\Phi} + \abs{\p_\xi^2 \Phi} \right) \abs{\p_\xi \Wn} d\xi dt \\
	& \leq C \int_{0}^{T} \norm{\p_\xi\Phi} \norm{\p_\xi^2 \Phi} dt + C\delta \int_{0}^{T} \norm{\p_\xi^2 \Phi}^2 dt \\
	& \quad +C \sup_{t \in (0,T)} \norm{\p_\xi\Wn}  \int_0^T \norm{\p_\xi\Phi}^{\frac{1}{2}} \norm{\p_\xi^2\Phi}^{\frac{1}{2}} \big(\norm{\p_\xi\Phi} +\norm{\p_\xi^2\Phi} \big) dt\\
	& \leq C \int_{0}^{T} \norm{\p_\xi\Phi}^2 dt + \Big(\frac{b_0}{4}+C\delta \Big) \int_{0}^{T} \norm{\p_\xi^2 \Phi}^2 dt,
	\end{align*}
	where $ b_0 = \inf \sigma'(\vtn)>0. $
	Similar to \cref{I-1-2}, \cref{Lem-H} yields that 
	\begin{align*}
		\int_0^T \int_\R \abs{I_{2,5}} d\xi dt & \leq C \int_0^T \big[ \norm{\Hn_1} \norm{\p_\xi^2 \Phi} + \big(\norm{\p_\xi \Hn_2}+\norm{\p_\xi \Hn_1}_1 \big) \norm{\p_\xi  \Wn} \big] dt \\
		& \leq C\e\int_0^T \norm{\p_\xi^2 \Phi}^2 dt + C\e + C\e \sup_{t\in(0,T)} \norm{\p_\xi  \Wn}^2. 
	\end{align*}
	Thus, collecting the estimates from $ I_{1,1} $ and $ I_{2,2} $ to $ I_{2,5}, $ one can integrate \cref{equ-2} over $ \R\times(0,T) $ to get that
	\begin{equation*}
	\begin{aligned}
	& \sup_{t \in (0,T)} \norm{\p_\xi (\Phi,\Wn)}^2 + \int_{0}^{T} \Big(\norm{\p_\xi^2 \Phi}^2 + \norm{\sqrt{-(u^S_\Y)'}~ \p_\xi\Wn }^2 \Big) dt \\
	& \qquad \leq C \Big( \norm{\Phi_0, \Wn_0}_1^2 + \e + \int_{0}^{T} \norm{\p_\xi \Phi}^2 dt \Big),
	\end{aligned}  
	\end{equation*}
	if $ \delta_0 $ and $ \e_0 $ are small.
	This, together with \cref{Lem-esti-0}, yields \cref{esti-1}.
\end{proof}

\vspace{0.3cm}

Now we go back to \cref{equ-Phi-Psi} to estimate $ \p_\xi^2 \Phi $ and the derivatives of $ \Psi. $

\begin{Lem}\label{Lem-Psi}
Under the assumptions of \cref{Prop-apriori},  there exist $ \e_0>0 $ and $ \delta_0>0 $ such that if $ \e<\e_0 $ and $ \delta<\delta_0, $ then
\begin{equation*}
\begin{aligned}
\sup_{t \in (0,T)} \big(\norm{\p_\xi^2\Phi}^2 + \norm{\Psi}_2^2\big)(t) + \int_0^T \norm{\p_\xi \Psi}_2^2(t) dt \leq C (\norm{\Phi_0,\Psi_0}_2^2 + \e),
\end{aligned}
\end{equation*}
where $ C>0 $ is independent of $ \e,\delta $ and $ T. $
\end{Lem}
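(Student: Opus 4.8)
The plan is to set aside the effective-velocity system \cref{equ-anti} at this stage and work directly with the original anti-derivative system \cref{equ-Phi-Psi}, whose second equation is uniformly parabolic in $\Psi$; the inputs are the dissipation $\int_0^T(\norm{\p_\xi\Phi}^2+\norm{\p_\xi^2\Phi}^2)\,dt$ from \cref{Lem-esti-0,Lem-esti-1} and the $L^\infty_t$ bounds on $\Phi,\p_\xi\Phi,\Wn,\p_\xi\Wn$ from the same lemmas. Two bookkeeping devices run through all the estimates. Whenever a second $\xi$-derivative would fall on the pressure difference it is \emph{expanded}, $\p_\xi(p(\vn)-p(\vtn))=p'(\vn)\p_\xi^2\Phi+(p'(\vn)-p'(\vtn))\p_\xi\vtn$, rather than integrated by parts, so that no derivative of $\Phi$ above $\p_\xi^2\Phi$ is ever created. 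And the non-$L^2$ profiles $\p_\xi\vtn$, $\p_\xi\utn$ (and $\p_\xi^2\utn$) are decomposed into a shock part -- essentially $(v^S)'$, $(u^S)'$, $(u^S)''$, which are $O(1)$ in $L^\infty(\R)$ and, being exponentially localized, also lie in $L^2(\R)$ -- plus a periodic part, the $\xi$-derivatives of $\vn_l,\vn_r,\un_l,\un_r$ cut by $g$, which by \cref{Lem-periodic,Lem-shift} are $O(\e e^{-\alpha t})$ in $L^\infty(\R)$; thus every product of $p'(\vn)-p'(\vtn)$ or $\sigma'(\vn)-\sigma'(\vtn)$ (which is $O(\abs{\p_\xi\Phi})$) with such a profile is bounded in $L^2$ by $C\norm{\p_\xi\Phi}$ (pairing the shock part's $L^\infty$-bound against $\norm{\p_\xi\Phi}$, or its $L^2$-bound against $\norm{\p_\xi\Phi}_{L^\infty}\le C\delta$ via \cref{bdd-apriori}) or by $C\e e^{-\alpha t}\norm{\p_\xi\Phi}$. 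Source terms are absorbed via \cref{Lem-H} and the genuinely cubic terms via the smallness of $\delta$.

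First I work with the velocity perturbation $\psi=\un-\utn=\p_\xi\Psi$, which by \cref{equ-phi-psi} satisfies the parabolic equation $\p_t\psi-s\p_\xi\psi+\p_\xi(p(\vn)-p(\vtn))=\p_\xi\big(\sigma'(\vn)\p_\xi\psi+(\sigma'(\vn)-\sigma'(\vtn))\p_\xi\utn\big)+\p_\xi\Hn_2$. Multiplying by $\psi$: the transport term drops, the diffusion yields $\int_\R\sigma'(\vn)\abs{\p_\xi\psi}^2$, and (after one integration by parts) the remainder is bounded by a small multiple of that dissipation plus $C\norm{\p_\xi\Phi}^2+C\norm{\p_\xi\Phi}\norm{\p_\xi^2\Phi}+C\e^2 e^{-2\alpha t}$, all of which are time-integrable; since the coupling produces only $\p_\xi\Phi$ on the right, never $\psi$ or $\Psi$, integration in $t$ gives $\sup_{(0,T)}\norm{\psi}^2+\int_0^T\norm{\p_\xi\psi}^2\,dt\le C(\norm{\Psi_0}_2^2+\e)$ with no Gronwall loss. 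Next I recover $\sup_{(0,T)}\norm{\p_\xi^2\Phi}^2$ algebraically: differentiating $\Psi-\Wn=\sigma(\vn)-\sigma(\vtn)$ (see \cref{W-def}) and using $\vn-\vtn=\p_\xi\Phi$ gives the pointwise identity $\sigma'(\vn)\p_\xi^2\Phi=\psi-\wn-(\sigma'(\vn)-\sigma'(\vtn))\p_\xi\vtn$, whose $L^2$-norm is controlled by $\sup\norm{\psi}^2$ (just obtained), $\sup\norm{\wn}^2=\sup\norm{\p_\xi\Wn}^2$ (from \cref{Lem-esti-1}), and $C\sup\norm{\p_\xi\Phi}^2$ (the last term, via the splitting), so $\sup_{(0,T)}\norm{\p_\xi^2\Phi}^2\le C(\norm{\Phi_0,\Psi_0}_2^2+\e)$.

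Then I differentiate once more: multiplying the $\psi$-equation by $-\p_\xi^2\psi$ produces the dissipation $\int_\R\sigma'(\vn)\abs{\p_\xi^2\psi}^2$; the expanded pressure term contributes $-\int_\R[p'(\vn)\p_\xi^2\Phi+(p'(\vn)-p'(\vtn))\p_\xi\vtn]\p_\xi^2\psi$, bounded using $\int_0^T\norm{\p_\xi^2\Phi}^2\le C\e$; and the variable-coefficient part of the diffusion leaves, besides a fraction of the dissipation, a term $\le C\int_\R\abs{\p_\xi\vn}^2\abs{\p_\xi\psi}^2$, which by $\p_\xi\vn=\p_\xi\vtn+\p_\xi^2\Phi$ and the splitting is $\le C(\e^2 e^{-2\alpha t}+\delta^2)\norm{\p_\xi\psi}^2+C\norm{\p_\xi\psi}\norm{\p_\xi^2\psi}+C\norm{\p_\xi^2\Phi}^2\norm{\p_\xi\psi}_{L^\infty}^2$; after integration the first is absorbed by the already-proven $\int_0^T\norm{\p_\xi\psi}^2$, the second by a small part of $\int_0^T\norm{\p_\xi^2\psi}^2$, and the third -- bounded by $\sup\norm{\p_\xi^2\Phi}^2\cdot\int_0^T\norm{\p_\xi^2\Phi}^2\le C\e^2$ times $\sup_{(0,T)}\norm{\p_\xi\psi}^2$ -- is absorbed into the left side for $\e$ small. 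Since $\psi_0=\p_\xi\Psi_0\in H^1$, this is the highest available level and it yields $\sup_{(0,T)}\norm{\p_\xi\psi}^2+\int_0^T\norm{\p_\xi^2\psi}^2\,dt\le C(\norm{\Phi_0,\Psi_0}_2^2+\e)$, i.e. control of $\sup\norm{\p_\xi^2\Psi}^2$ and $\int_0^T\norm{\p_\xi^3\Psi}^2$.

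Finally, to get the missing $\int_0^T\norm{\psi}^2=\int_0^T\norm{\p_\xi\Psi}^2$ I use the cross term $\frac{d}{dt}\int_\R\Phi\psi$: plugging in $\p_t\Phi=s\p_\xi\Phi+\psi+\Hn_1$ from \cref{equ-Phi-Psi} and the $\psi$-equation, the transport contributions $\pm s\int_\R\p_\xi\Phi\,\psi$ cancel and one is left with $\frac{d}{dt}\int_\R\Phi\psi=\norm{\psi}^2+\int_\R\p_\xi\Phi(p(\vn)-p(\vtn))-\int_\R\sigma'(\vn)\p_\xi\Phi\,\p_\xi\psi+(\text{source}+\text{cubic})$; integrating in $t$ and estimating $\abs{\int_\R\Phi\psi}\le\sup\norm{\Phi}\sup\norm{\psi}\le C\e$, $\int_0^T\norm{\p_\xi\Phi}^2\le C\e$, $\int_0^T\norm{\p_\xi\Phi}\norm{\p_\xi\psi}\le C\e$, and $\int_0^T\abs{\int_\R\Hn_1\psi}\le C\e(\int_0^T\norm{\psi}^2)^{1/2}$ (via \cref{Lem-H}), a Young inequality closes $\int_0^T\norm{\psi}^2\le C(\norm{\Phi_0,\Psi_0}_2^2+\e)$. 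Since moreover $\sup_{(0,T)}\norm{\Psi}^2\le C(\sup\norm{\Wn}^2+\sup\norm{\p_\xi\Phi}^2)$ from $\Psi=\Wn+(\sigma(\vn)-\sigma(\vtn))$ and \cref{Lem-esti-0,Lem-esti-1}, collecting the pieces gives the lemma. The main obstacle is not any single estimate but the \emph{ordering}: one must establish (level $0$ for $\psi$) $\to$ (algebraic bound for $\p_\xi^2\Phi$) $\to$ (level $1$ for $\psi$) $\to$ (cross term) so that every bad coupling or variable-coefficient term is thrown onto a quantity already controlled rather than onto the one being estimated, while never introducing a derivative of $\Phi$ or $\Wn$ of higher order than \cref{Lem-esti-0,Lem-esti-1} provide -- which is why the pressure difference is expanded instead of integrated by parts, and why $\p_\xi\vtn,\p_\xi\utn,\p_\xi^2\utn$ must be split into shock and periodic parts at every occurrence.
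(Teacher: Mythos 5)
Your proposal is correct and its overall architecture coincides with the paper's: both return from the effective-velocity system to \cref{equ-Phi-Psi}, feed in the dissipation and $L^\infty_t$ bounds from \cref{Lem-esti-0,Lem-esti-1}, run $L^2$ and $H^1$ energy estimates on $\psi=\p_\xi\Psi$ with the cubic terms absorbed by $\delta$-smallness and the sources by \cref{Lem-H}, and recover the time-integrated $\norm{\psi}^2$ by a hyperbolic--parabolic cross estimate. Two of your steps are executed differently, though, and both variants are valid. For $\sup_t\norm{\p_\xi^2\Phi}^2$ the paper differentiates the mass equation \cref{equ-Phi-Psi}$_1$ twice and does a transport-type energy estimate (see \cref{eq-5}), which costs $\int_0^T\norm{\p_\xi^2\psi}^2dt$ and $\int_0^T\norm{\p_\xi^2\Hn_1}^2dt$; you instead read the bound off the pointwise identity $\sigma'(\vn)\p_\xi^2\Phi=\psi-\wn-(\sigma'(\vn)-\sigma'(\vtn))\p_\xi\vtn$ obtained by differentiating \cref{W-def}, which is slicker, needs no time integration, and makes that piece independent of the $H^1$ estimate of $\psi$. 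For $\int_0^T\norm{\p_\xi\Psi}^2dt$ the paper uses the multiplier pair $\big(\Phi,-\Psi/p'(\vtn)\big)$ on the anti-derivative system (see \cref{eq-est-1}), extracting the dissipation from the viscosity term and paying for the $-(u^S_\Y)'\Psi^2$ contribution with the weighted dissipation of \cref{Lem-esti-0}; your cross functional $\frac{d}{dt}\int_\R\Phi\psi\,d\xi$ extracts $\norm{\psi}^2$ directly from the hyperbolic coupling and avoids the shock-weighted terms altogether. Your bookkeeping for the worst cubic term in the $H^1$ step is stated a bit loosely (the cleanest route is $\norm{\p_\xi^2\Phi}_{L^\infty_t L^2}\le C\delta$ together with $\norm{\p_\xi\psi}_{L^\infty(\R)}^2\le C\norm{\p_\xi\psi}\,\norm{\p_\xi\psi}_1$, exactly as the paper handles $\sigma''(\vn)\p_\xi\phi\,\p_\xi\psi\,\p_\xi^2\psi$), but this is a presentational issue, not a gap.
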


\begin{proof}
First, it follows from Lemmas \ref{Lem-esti-0} and \ref{Lem-esti-1} that
\begin{align}
& \sup_{t \in (0,T)} \big(\norm{\Phi}_1^2 + \norm{\Psi}^2\big) + \int_0^T \Big( \norm{\p_\xi\Phi}_1^2 + \norm{\sqrt{-(u^S_\Y)'} (\Psi, \p_\xi \Psi)}^2 \Big) dt \notag \\
& \qquad \leq C \big(\norm{\Phi_0,\Wn_0}_1^2 + \e\big)  \leq C \big(\norm{\Phi_0}_2^2 + \norm{\Psi_0}_1^2 + \e\big).  \label{ineq-Psi-0}
\end{align}

1) To estimate $ \int_0^T \norm{\p_\xi \Psi}^2 dt, $ by multiplying $ \Phi $ on \cref{equ-Phi-Psi}$ _1 $ and $ -\frac{1}{p'(\vtn)}\Psi $ on \cref{equ-Phi-Psi}$ _2, $ respectively, one can get that
	\begin{equation}\label{eq-est-1}
		\begin{aligned}
			& \p_t\Big(\frac{\abs{\Phi}^2}{2} + \frac{\Psi^2}{2\abs{p'(\vtn)}}\Big) - \frac{p''(\vtn) (u^S_\Y)'}{2(p'(\vtn))^2} \Psi^2 + \frac{\sigma'(\vn)}{\abs{p'(\vtn)}} \abs{\p_\xi \Psi}^2 \\
			& \quad = I_{1,1} \Psi^2 + \underbrace{\frac{p(\vn|\vtn)}{p'(\vtn)} \Psi + \frac{\sigma''(\vn)}{p'(\vtn)} \p_\xi^2 \Phi \Psi \p_\xi \Psi}_{I_{3,1}} \\
			& \qquad  \underbrace{+ \Big( \frac{\sigma''(\vn)}{p'(\vtn)} - \frac{\sigma'(\vn)p''(\vtn)}{(p'(\vtn))^2}\Big)  \p_\xi \vtn \Psi \p_\xi\Psi - \frac{1}{p'(\vtn)} \big(\sigma'(\vn)-\sigma'(\vtn)\big)\p_\xi \utn \Psi}_{I_{3,2}} \\
			& \qquad \underbrace{+ \Hn_1 \Phi - \frac{1}{p'(\vtn)} \Hn_2 \Psi}_{I_{3,3}} + \p_\xi \{\cdots\},
		\end{aligned}	
	\end{equation}
	where $ \{\cdots\} = \frac{s}{2} \Phi^2 + \Phi \Psi - \frac{s}{2p'(\vtn)} \Psi^2 - \frac{\sigma'(\vn)}{p'(\vtn)} \Psi \p_\xi \Psi.  $
	The nonlinear term $ I_{3,1} $ satisfies that
	\begin{align*}
		\int_0^T \int_\R \abs{I_{3,1}} d\xi dt & \leq C \norm{\Psi}_{L^\infty} \int_0^T (\norm{\p_\xi\Phi}^2+\norm{\p_\xi^2 \Phi} \norm{\p_\xi\Psi}) dy \\
		& \leq C \delta \int_0^T (\norm{\p_\xi \Phi}_1^2+\norm{\p_\xi \Psi}^2) dt.
	\end{align*}
	For $ I_{3,2}, $ it follows from \cref{term-1,term-2} that
	\begin{align*}
		\int_0^T \int_\R \abs{I_{3,2}} d\xi dt & \leq C \int_0^T \int_\R \abs{(v^S_\X)'} \abs{\Psi} \abs{\p_\xi\Psi} d\xi dt + C\e \int_0^T e^{-\alpha t} \norm{\Psi} \norm{\p_\xi \Psi} dt \\
		& \quad + C \int_0^T \int_\R \abs{(u^S_\Y)'} \abs{\p_\xi \Phi} \abs{\Psi} d\xi dt  + C\e \int_0^T e^{-\alpha t} \norm{\p_\xi \Phi} \norm{\Psi} dt \\
		& \leq C \int_0^T \norm{\sqrt{(-u^S_\Y)'} (\Psi, \p_\xi \Psi)}^2 dt + C \e \sup_{t \in (0,T)} \norm{\Psi}^2 \\
		& \quad + C\e \int_0^T \norm{\p_\xi \Psi}^2 dt + C \int_0^T \norm{\p_\xi \Phi}^2 dt.
	\end{align*}
	And similar to \cref{I-1-2}, it follows from \cref{Lem-H} that
	\begin{align*}
		\int_0^T \int_\R \abs{I_{3,3}} d\xi dt & \leq C\e \sup_{t \in (0,T)} \norm{\Phi,\Psi}^2 + C\e.
	\end{align*}
	Collecting the estimates from  $ I_{3,1} $ to $ I_{3,3}, $ and using \cref{I-1-1,ineq-Psi-0}, one can integrate \cref{eq-est-1} over $ \R\times(0,T) $ to get that
	\begin{equation}\label{est-Psi-1}
		\int_0^T \norm{\p_\xi \Psi}^2 dt	\leq C \norm{\Phi_0}_2^2 + C\norm{\Psi_0}_1^2 + C\e.	
	\end{equation} 

2) To estimate $ \psi, $ we differentiate \cref{equ-Phi-Psi}$ _2 $ with respect to $ \xi $ once and then multiply $ \psi $ on the resulting equation, which gives that
\begin{equation}\label{eq-4}
	\begin{aligned}
		& \p_t \Big(\frac{\psi^2}{2} \Big) + \sigma'(\vn) \big(\p_\xi \psi \big)^2 = \big[ p(\vn) - p(\vtn)\big] \p_\xi \psi \\
		& \qquad\qquad\qquad\qquad\qquad\quad - (\sigma'(\vn)-\sigma'(\vtn)) \p_\xi \utn \p_\xi \psi - \Hn_2 \p_\xi \psi +\p_\xi \{\cdots\},
	\end{aligned}
\end{equation}
where $ \{\cdots\} = \frac{s}{2}\psi^2 - \big[ p(\vn) - p(\vtn)\big]\psi + \big[ \sigma'(\vn) \p_\xi \un-\sigma'(\vtn) \p_\xi \utn \big] \psi + \Hn_2\psi. $ 
Integrating \cref{eq-4} over $ \R\times(0,T) $ yields that
\begin{equation*}
	\begin{aligned}
		\sup_{t \in (0,T)} \norm{\psi}^2 + \int_0^T \norm{\p_\xi \psi}^2 dt & \leq C \norm{\Psi_0}_1^2 + C \int_0^T \big(\norm{\phi} +  \norm{\Hn_2}\big) \norm{\p_\xi\psi} dt \\
		& \leq C \norm{\Psi_0}_1^2 + \frac{1}{2} \int_0^T \norm{\p_\xi \psi}^2 dt + C \int_0^T \norm{\phi}^2 dt + C\e.
	\end{aligned}
\end{equation*}
This, together with \cref{ineq-Psi-0}, yields that
\begin{equation}\label{ineq-psi}
	\sup_{t \in (0,T)} \norm{\psi}^2 + \int_0^T \norm{\p_\xi \psi}^2 dt \leq C (\norm{\Phi_0}_2^2 + \norm{\Psi_0}_1^2 + \e).
\end{equation}

3) To estimate $ \p_\xi \psi, $ we differentiate \cref{eq-4}$ _2 $ with respect to $ \xi $ twice and multiply $ \p_\xi \psi $ on the resulting equation, which gives that
\begin{equation}\label{eq-6}
	\begin{aligned}
		& \p_t \Big(\frac{\abs{\p_\xi \psi}^2}{2} \Big) + \sigma'(\vn) \abs{\p_\xi^2 \psi}^2 = \p_\xi\big[p(\vn)-p(\vtn) \big] \p_\xi^2 \psi  - \sigma''(\vn) \p_\xi \vtn \p_\xi \psi \p_\xi^2 \psi \\
		& \quad - \sigma''(\vn) \p_\xi \phi \p_\xi \psi \p_\xi^2 \psi - \p_\xi \big[(\sigma'(\vn)-\sigma'(\vtn)) \p_\xi \utn \big] \p_\xi^2 \psi -\p_\xi \Hn_2 \p_\xi^2 \psi + \p_\xi \{\cdots\},
	\end{aligned}
\end{equation}
where $ \{\cdots\} = \frac{s}{2} \abs{\p_\xi\psi}^2 - \p_\xi \big[p(\vn)-p(\vtn) \big] \p_\xi\psi + \p_\xi \big[ \sigma'(\vn) \p_\xi \un-\sigma'(\vtn) \p_\xi \utn \big] \p_\xi \psi + \p_\xi \Hn_2 \p_\xi \psi. $ By \cref{assum-apriori} and Sobolev inequality, the most difficult term $ \sigma''(\vn) \p_\xi \phi \p_\xi\psi \p_\xi^2\psi $ satisfies that
\begin{align*}
	\int_0^T \int_\R \abs{\sigma''(\vn) \p_\xi \phi \p_\xi\psi \p_\xi^2\psi } d\xi dt & \leq C \int_0^T \norm{\p_\xi\phi} \norm{\p_\xi\psi}_{L^\infty(\R)} \norm{\p_\xi^2 \psi} dt \\
	& \leq C \sup_{t \in (0,T)} \norm{\p_\xi\phi}^{\frac{1}{2}} \norm{\p_\xi\psi}^{\frac{1}{2}} \int_0^T \norm{\p_\xi\phi}^{\frac{1}{2}} \norm{\p_\xi^2 \psi}^{\frac{3}{2}} dt \\
	& \leq C \delta \int_0^T \big(\norm{\p_\xi\phi}^2 + \norm{\p_\xi^2 \psi}^2 \big) dt.
\end{align*}
Then integrating \cref{eq-6} over $ \R\times(0,T) $ yields that
\begin{align*}
	\sup_{t \in (0,T)} \norm{\p_\xi\psi}^2 + \int_0^T \norm{\p_\xi^2\psi}^2 dt & \leq C \Big(\norm{\Psi_0}_2^2 + \int_0^T \norm{\phi}_1^2 dt + \int_0^T \norm{\p_\xi\psi}^2 dt + \int_0^T \norm{\p_\xi\Hn_2}^2 dt\Big),
\end{align*}
which, together with \cref{ineq-Psi-0,ineq-psi}, implies that
\begin{equation}\label{est-Psi-2}
	\sup_{t \in (0,T)} \norm{\p_\xi\psi}^2 + \int_0^T \norm{\p_\xi^2\psi}^2 dt \leq C \big(\norm{\Phi_0}_2^2 + \norm{\Psi_0}_2^2 + \e \big).
\end{equation}

4) Finally, we estimate $ \p_\xi \phi. $ Differentiating  \cref{equ-Phi-Psi}$ _1 $ with respect to $ \xi $ twice and multiplying $ \p_\xi \phi $ on the resulting equation gives that
\begin{equation}\label{eq-5}
	\begin{aligned}
		& \p_t \Big(\frac{\abs{\p_\xi \phi}^2}{2}\Big) = \p_\xi \phi \p_\xi^2 \psi + \p_\xi\phi \p_\xi^2\Hn_1 + \p_\xi \Big( \frac{s\abs{\p_\xi \phi}^2}{2}\Big).
	\end{aligned}
\end{equation}
Thus, it follows from \cref{ineq-Psi-0,est-Psi-2} that
\begin{align*}
	\sup_{t \in (0,T)} \norm{\p_\xi\phi}^2 & \leq C \big(\norm{\Phi_0}_2^2 + \int_0^T \norm{\phi}_1^2 dt + \int_0^T \norm{\p_\xi^2 \psi}^2 dt + \int_0^T \norm{\p_\xi^2\Hn_1}^2 dt \big) \\
	& \leq C (\norm{\Phi_0}_2^2 + \norm{\Psi_0}_2^2 + \e).
\end{align*}

Collecting the estimates from 1) to 4), the proof of \cref{Lem-Psi} is completed.
\end{proof}

Then \cref{Prop-apriori} can follow immediately from Lemmas \ref{Lem-esti-1}--\ref{Lem-Psi}.


\subsection{Proof of \cref{Thm-pert}}

The local existence of the solution of \cref{equ-Phi-Psi,ic-anti} in $ \mathcal{B}(0,T_0) $ can be proved by the standard contraction mapping theorem. 
Hence, with \cref{Prop-apriori}, one can let $ \e_0>0 $ be small such that $ \sup\limits_{t \in (0,T_0)} \norm{\Phi,\Psi}_2^2 \leq C_0(\e_0^2 + \e_0) < \delta_0^2, $ then the a priori assumptions \cref{assum-apriori} can be closed.
Through a standard continuation argument, one can obtain a global in time solution $ \left(\Phi, \Psi \right)\in \mathcal{B}(0,+\infty). $

Once the estimate \cref{apriori} with $ T=+\infty $ is obtained, one can follow the same line as in \cite{Matsumura1985} to prove that $ \norm{\phi,\psi}_{L^\infty(\R)}(t) \to 0 $ as $ t\to+\infty, $ i.e. \cref{0limit} holds true.  The details are omitted. Thus, the proof of \cref{Thm-pert} is completed.


\section{Proof of Lemmas \ref{Lem-shift} and \ref{Lem-H}}\label{Sec-prop-shift}

For convenience, let
\begin{equation}\label{phi-psi-lr}
	\begin{aligned}
		& \phi_{l,r}(x,t) := v_{l,r}(x,t)-\ovl{v}_{l,r},\quad \psi_{l,r}(x,t) := u_{l,r}(x,t)-\ovl{u}_{l,r},
	\end{aligned}
\end{equation} 
all of which are space-periodic functions with zero averages for any $ t\geq 0. $

\subsection{Proof of \cref{Lem-shift}}

Since $ \e_0 $ is small enough, it follows from \cref{Lem-periodic} that the denominators of \cref{ode-shift} are away from zero and then the right-hand side of \cref{ode-shift} are $ C^1 $ smooth. Then the existence and uniqueness of the solution $ (\X, \Y) \in C^1\left([0,+\infty)\right) $ to \cref{ode-shift} with the initial data $ (\X,\Y)(0)=(\X_0, \Y_0) $ can follow from the Cauchy-Lipschitz theorem.
And from \cref{R-H} and \cref{Lem-periodic}, one can easily prove that
$ \abs{\X'(t)}, \abs{\Y'(t)} \leq C\e e^{-\alpha t} $ for all $ t>0. $ Thus both limits, $ \lim\limits_{t\to +\infty} \X(t) $ and $ \lim\limits_{t\to +\infty} \Y(t), $ exist and it holds that $ \abs{\X(t)-\X_0} + \abs{\Y(t)-\Y_0} \leq C\e. $ 

Motivated by \cite{Xin2019}, we first compute $ \lim\limits_{t\to +\infty} \Y(t). $
For any fixed $ x\in [0,1], t>0, $ and integer $ N>0, $ define the domain (see Figure 1)
\begin{figure}[htbp!]
	\centering	\scalebox{0.25}{\includegraphics{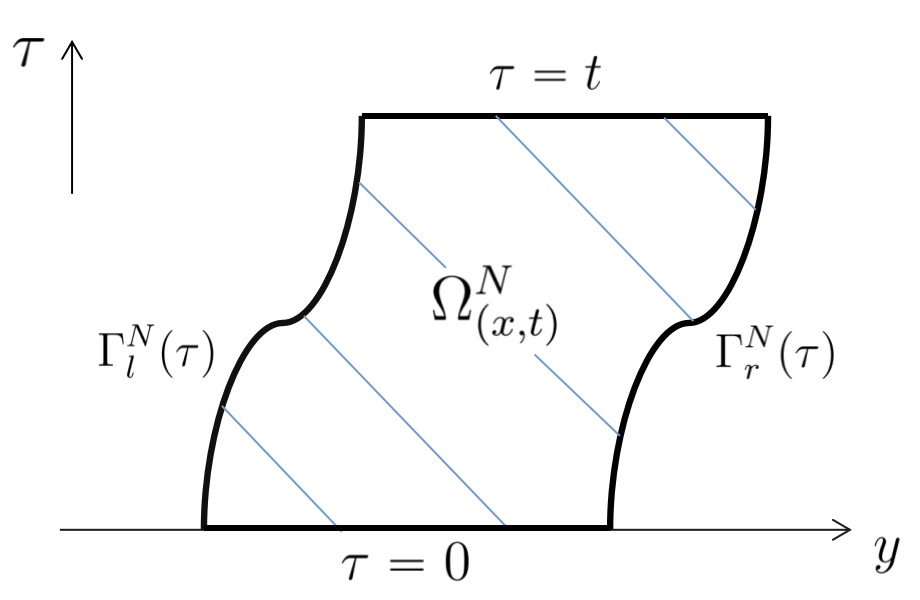}}
	\caption{Domain $ \Omega_{(x,t)}^N $}
\end{figure}
\begin{equation*}
	\begin{aligned}
		\Omega_{(x,t)}^N & := \big\{ (y,\tau): 0<\tau<t,\quad \Gamma_l^N(\tau) < y < \Gamma_r^N(\tau) \big\} \\
		\text{with } \quad \Gamma_l^N(\tau
		) & := s\tau+\Y(\tau)+(-N+x) \pi_l, \\
		\Gamma_r^N(\tau) & := s\tau+\Y(\tau) +(N+x)\pi_r.
	\end{aligned}
\end{equation*}
In the following, let $ C>0 $ be a generic constant independent of $ \e, N $ and $ t, $ and use $ O(1) $ to denote the terms which can be bounded by $ C. $ 

Integrating \cref{eq-ansatz}$ _2 $ over $ \Omega_{(x,t)}^N $ yields that
\begin{equation}\label{iden-1}
	\begin{aligned}
		&\int_{\Gamma_l^N(0)}^{\Gamma_r^N(0)} \ut(y,0) dy + \int_0^t \left[ -p(\vt) + \sigma'(\vt) \p_x\ut + \ut \left(s+\Y'\right) \right]\left(\Gamma_r^N(\tau),\tau\right)d\tau \\
		& \qquad -\int_{\Gamma_l^N(t)}^{\Gamma_r^N(t)} \ut(y,t) dy - \int_0^t \left[ -p(\vt) + \sigma'(\vt) \p_x\ut + \ut \left(s+\Y'\right) \right] \left(\Gamma_l^N(\tau),\tau\right) d\tau \\
		& \quad = -\iint_{\Omega_{(x,t)}^N} \left(\p_y F_3 + f_4 \right) dyd\tau
	\end{aligned}
\end{equation}
By the definition of $ \ut $ in \cref{ansatz-shock}, one has that
	\begin{align}
	& \int_{\Gamma_l^N(0)}^{\Gamma_r^N(0)} \ut(y,0) dy -\int_{\Gamma_l^N(t)}^{\Gamma_r^N(t)} \ut(y,t) dy \notag \\
	& \quad = \int_{\Gamma_l^N(0)}^{\Gamma_r^N(0)} \left[ \psi_{0l}(1-g_{\Y_0}) + \psi_{0r} g_{\Y_0} \right] dy - \int_{\Gamma_l^N(t)}^{\Gamma_r^N(t)} \left[\psi_l(1-g_{\Y}) + \psi_r g_{\Y} \right] dy \notag \\
	& \quad = \int_{0}^{\Gamma_r^N(0)} \left(\psi_{0l}-\psi_{0r}\right)(1-g_{\Y_0}) dy + \int_{0}^{\Y_0+ x\pi_r} \psi_{0r}(y)dy \notag \\
	& \qquad  + \int_{\Gamma_l^N(0)}^0 (\psi_{0r}-\psi_{0l}) g_{\Y_0} dy + \int_{\Y_0+x \pi_l}^0 \psi_{0l}(y) dy - R_1, \label{eq-7}
\end{align}
where 
\begin{align*}
	R_1(x,t) = & \int_{0}^{\Gamma_l^N(t)} \left(\psi_l-\psi_r\right)(1-g_{\Y}) dy + \int_{0}^{st+\Y(t)+ x\pi_r} \psi_r(y,t)dy \\
	&~ + \int_{\Gamma_l^N(t)}^0 (\psi_r-\psi_l) g_{\Y} dy + \int_{st+\Y(t)+x \pi_l}^0 \psi_l(y,t) dy,
\end{align*}
which satisfies $ \norm{R_1}_{L^\infty(\R)} \leq C \left(\norm{\psi_l}_{L^\infty(\R)}+ \norm{\psi_r}_{L^\infty(\R)} \right)
\leq Ce^{-\alpha t}. $
And the second integral on the left hand side of \cref{iden-1} satisfies 
\begin{align}
	& \int_0^t \left[ -p(\vt) + \sigma'(\vt) \p_x\ut + \ut \left(s+\Y'\right) \right] \left(\Gamma_r^N(\tau),\tau\right) d\tau \notag \\
	& \quad = \int_{0}^{t} \left[ -p(v_r) + \sigma'(v_r) \p_x u_r + u_r \left(s+\Y'\right) \right] \left(\Gamma_r^N(\tau),\tau\right) d\tau + R_2 \notag \\
	& \quad = \int_{0}^{t} \left[ -p(v_r) + \sigma'(v_r) \p_t v_r + u_r \left(s+\Y'\right) \right] \left(\Gamma_r^N(\tau),\tau\right) d\tau  + R_2 \notag \\
	& \quad = \int_{0}^{t} \left[ -p(v_r) + u_r \left(s+\Y'\right) \right] \left(s\tau+\Y(\tau) + x\pi_r,\tau\right)d \tau \notag \\
	& \qquad  + \sigma\big(v_r(st+\Y(t)+x\pi_r,t)\big) - \sigma\big(\ovlvr+\phi_{0r}(\Y_0+x\pi_r) \big) + R_2, \label{eq-8}
\end{align}
where 
\begin{align*}
	& R_2 = \int_0^t \big\{ - \left[p(\vt)-p(v_r)\right] + \left[\sigma'(\vt) \p_x\ut -\sigma'(v_r) \p_x u_r \right] + \left(\ut-u_r\right) \left(s+\Y'\right) \big\}\left(\Gamma_r^N(\tau),\tau\right)d\tau.
\end{align*}
Note that
\begin{align*}
	\sigma'(\vt) \p_x \ut - \sigma'(v_r) \p_x u_r & = \sigma'(\vt) \left[ (u_r-u_l) g'_{st+\Y} + \p_x u_l (1-g_{st+\Y}) + \p_x u_r g_{st+\Y} \right] - \sigma'(v_r) \p_x u_r \\
	& = O(1) g'_{st+\Y} + O(1) (1-g_{st+\Y}) + \p_x u_r \left( \sigma'(\vt) - \sigma'(v_r) \right) \\
	& = O(1) g'_{st+\Y} + O(1) (1-g_{st+\Y}) + O(1) (1-g_{st+\X}).
\end{align*}
Thus, it holds that
\begin{align*}
	\abs{R_2} \leq C \int_{0}^{t} \big\{ & \abs{ 1-g\left(\Y(\tau)-\X(\tau) + (N+x)\pi_r\right)} + \abs{ g'\left((N+x)\pi_r\right)} +\abs{ 1-g\left((N+x)\pi_r \right)}  \big\} d\tau,
\end{align*}
which tends to zero as $ N\to +\infty $ for fixed $ t>0. $
Similarly, one can prove that
\begin{align}
	& \int_0^t \left[-p(\vt) + \sigma'(\vt) \p_x\ut + \ut \left(s+\Y'\right) \right] \left(\Gamma_l^N(\tau),\tau\right) d\tau \notag \\
	& \quad = \int_{0}^{t} \left[ -p(v_l) + u_l \left(s+\Y'\right) \right] \left(s\tau+\Y(\tau)+x\pi_l, \tau\right) d\tau \notag \\
	& \qquad  + \sigma\big(v_l(st+\Y(t)+x\pi_l,t)\big) - \sigma\big(\ovlvl+\phi_{0l}(\Y_0+x\pi_l)\big) + R_3, \label{eq-9}
\end{align}
where the remaining term $ R_3 $ 
satisfies that $ \abs{R_3} \to 0 $ as $ N\to +\infty. $

Collecting \cref{eq-7,eq-8,eq-9}, by integrating \cref{iden-1} with respective to $ x $ over $ [0,1] $ and then letting $ N\rightarrow +\infty, $ one can get that 
\begin{align*}
	0 =
	& \int_{0}^{+\infty} \left(\psi_{0l}-\psi_{0r}\right)(1-g_{\Y_0}) dy + \frac{1}{\pi_r} \int_{0}^{\pi_r}\int_{0}^{x} \psi_{0r}(y)dy dx  - \int_{-\infty}^0 (\psi_{0l}-\psi_{0r}) g_{\Y_0} dy \\
	& - \frac{1}{\pi_l} \int_{0}^{\pi_l}\int_{0}^{x} \psi_{0l}(y)dy dx - \int_{0}^{t} \frac{1}{\pi_r} \int_{0}^{\pi_r} \left[ p(v_r(x,\tau)) - p(\ovlvr) \right] dx d\tau - p(\ovlvr) t \\
	& + \ovlur \left(st+\Y(t) - \Y_0\right) 
	+ \sigma(\ovlvr) - \frac{1}{\pi_r} \int_{0}^{\pi_r} \sigma(\ovlvr+\phi_{0r}(x)) dx \\
	& + \int_{0}^{t} \frac{1}{\pi_l} \int_{0}^{\pi_l} \left[p(v_l(x,\tau))-p(\ovlvl) \right] dx d\tau + p(\ovlvl) t - \ovlul \left(st+\Y(t)-\Y_0\right) \\
	& - \sigma(\ovlvl) + \frac{1}{\pi_l} \int_{0}^{\pi_l} \sigma(\ovlvl+\phi_{0l}(x)) dx  + O(1)\e e^{-\alpha t} \\
	= & (\ovlur-\ovlul) \left(\Y(t) -\Y_\infty \right) + O(1)\e e^{-\alpha t}.
\end{align*}
In the same way, one can prove $ \lim\limits_{t\to+\infty} \X(t) = \X_\infty. $ The proof of \cref{Lem-shift} is completed.


\subsection{Proof of \cref{Lem-H}}\label{Sec-H}	
	The idea is to extract the ``averages'', $ \ovlvl,\ovlul,\ovlvr,\ovlur,v_\X^S $ and $ u_\Y^S, $ from $ \vn_l,\un_l,\vn_r,\un_r,\vtn $ and $ \utn, $ respectively, where all the differences decay exponentially fast with respect to $ t, $ e.g. $ \abs{\vtn-v^S_\X}=\abs{(\vn_l-\ovlvl) (1-g_\X) + (\vn_r-\ovlvr) g_\X} \leq C\e e^{-\alpha t}. $

1) We first estimate $ \Hn_1. $ From \cref{source}, we decompose $ \Hn_1 $ for $ \xi<0 $ as
	\begin{equation}\label{dec-H-1}
		\Hn_1(\xi,t) = \Fn_1(\xi,t) + \int_{-\infty}^{\xi} \fn_2(y,t)dy = D_{1,1}^-(\xi,t) + D_{1,2}^-(\xi,t),
	\end{equation}
	where 
	\begin{align*}
		D_{1,1}^-(\xi,t) := & \left(\ovlur-\ovlul\right) \left(g_\Y-g_\X\right)(\xi) + (\ovlur-\ovlul)g_{\X}(\xi) +  (s+\X')(\ovlvr-\ovlvl) g_{\X}(\xi) \\
		= & (\ovlvr-\ovlvl) \left[ \X'(t) g_\X(\xi) + s \left(g_\X-g_\Y\right)(\xi) \right], \\
		D_{1,2}^-(\xi,t) := & \left(\phi_r-\phi_l\right)(\xi+st,t) \left(g_\Y-g_\X\right)(\xi) + \int_{-\infty}^\xi (\psi_r-\psi_l)(y+st,t) g'_{\X}(y) dy \\
		& + (s+\X') \int_{-\infty}^\xi (\phi_r-\phi_l)(y+st,t) g'_{\X}(y) dy.
	\end{align*}
	From \cref{Lem-periodic}, one has that
\begin{equation*}
	\sum_{k=0}^{2}\norm{\p_\xi^k \left( \phi_l,\psi_l,\phi_r,\psi_r \right)}_{L^\infty(\R)} \leq C \e e^{-\alpha t}, \quad t>0.
\end{equation*}
Hence, by the fact that 
$$ \left(g_\X-g_\Y\right)(\xi) = \int_0^1 g'\left(\xi-\X+\theta\left( \Y-\X\right)\right) d\theta \left( \Y-\X\right) $$ with $ \abs{\Y-\X}(t) \leq C \e e^{-\alpha t}, $ one can get that
\begin{equation*}
	\begin{aligned}
	\sum_{k=0}^2 \int_{-\infty}^0 \abs{\p_\xi^k \Hn_1}^2 d\xi & \leq \sum_{k=0}^2  \int_{-\infty}^0 \left(\abs{\p_\xi^k D_{1,1}^-}^2 + \abs{\p_\xi^k D_{1,2}^-}^2 \right) d\xi \\
		& \leq C \e^2 e^{-2\alpha t} \sum_{k=0}^{3} \int_{-\infty}^{M_0} \abs{\frac{d^k}{d\xi^k} g(\xi)}^2 d\xi \leq C \e^2 e^{-2\alpha t}.
	\end{aligned}
\end{equation*}
where $ M_0 := \sup\limits_{t\geq 0} \left(\abs{\X} + \abs{\Y}\right)(t). $ 
In a similar way, one can decompose $ \Hn_1 $ for $ \xi >0 $ through
	\begin{equation*}
		\Hn_1 = \Fn_1(\xi,t) - \int_{\xi}^{+\infty} \fn_2(y,t)dy = D_{1,1}^+(\xi,t) + D_{1,2}^+(\xi,t),
	\end{equation*}
	where it holds that
	\begin{align}
	\sum_{k=0}^{2} \int_0^{+\infty} \abs{\p_\xi^k \Hn_1}^2 d\xi & \leq \sum_{k=0}^{2} \int_0^{+\infty} \left(\abs{\p_\xi^k D_{1,1}^+}^2 + \abs{\p_\xi^k D_{1,2}^+}^2\right) d\xi \notag \\ 
	& \leq C \e^2 e^{-2\alpha t} \sum_{k=0}^{3} \int_{-M_0}^{+ \infty} \abs{\frac{d^k}{d\xi^k} (1-g(\xi))}^2 d\xi \leq C \e^2 e^{-2\alpha t}. \label{H-1-plus}
	\end{align}

2) For $ \Hn_2 = -\sigma'(\vtn) \p_\xi \Hn_1 + \Fn_3 + \Fn_4, $ it follows from \cref{source} that for $ \xi<0, $
	\begin{equation*}
		\begin{aligned}
			\Fn_3 + \Fn_4 = & ~\Fn_3 + \int_{-\infty}^{\xi} \fn_4(y,t)dy \\
			= &  -\left(p(\vtn)-p(\vn_l)\right) + \left(p(\vn_r)-p(\vn_l)\right) g_\Y + \sigma'(\vtn) (\un_r-\un_l) g'_\Y  \\
			& + \p_\xi \un_l  \left(\sigma'(\vtn)-\sigma'(\vn_l)\right)(1-g_\Y) + \p_\xi \un_r \left( \sigma'(\vtn) - \sigma'(\vn_r)\right)g_\Y \\
			& + (s+\Y') \int_{-\infty}^{\xi} (\un_r-\un_l) g'_\Y dy + \int_{-\infty}^{\xi} \big(\sigma'(\vn_r) \p_y \un_r - \sigma'(\vn_l)\p_y \un_l \big) g'_\Y dy \\
			& - \int_{-\infty}^{\xi} \left(p(\vn_r) - p(\vn_l) \right) g'_\Y dy.
		\end{aligned}
	\end{equation*}
	Note that $ \sigma'(\vtn)-\sigma'(\vn_l) = b_1(\vn_l,\vtn) \left(\vtn-\vn_l\right) = b_1(\vn_l,\vtn) \left(\vn_r-\vn_l\right) g_\X, $ where $ b_1(u,v) := \int_{0}^{1} \sigma''(u+\theta(v-u)) d\theta. $ And similarly, $ \sigma'(\vtn)-\sigma'(\vn_r) = - b_1(\vn_r,\vtn) \left(\vn_r-\vn_l\right) (1-g_\X). $
	Thus, we decompose $ \Fn_3+\Fn_4 = D_{2,1}^- + D_{2,2}^- $ as follows,
	\begin{equation*}
		\begin{aligned}
			D_{2,1}^- = & - p(v^S_\X) + p(\ovlvl) + \left(p(\ovlvr) - p(\ovlvl)\right)g_\Y + \sigma'(v_\X^S) \left(\ovlur-\ovlul\right) g'_\Y \\
			& + \left(s+\Y'\right) \left(\ovlur-\ovlul\right) g_\Y - \left( p(\ovlvr)-p(\ovlvl) \right) g_\Y, \\
			D_{2,2}^- = &  - \left( p(\vtn) - p(v^S_\X) - p(\vn_l) + p(\ovlvl) \right) \\
			& - \left[\left(p(\vn_r) -p(\ovlvr)\right) - \left(p(\vn_l)- p(\ovlvl)\right)\right] g_{\Y} \\
			& + \left[\sigma'(\vtn) (\un_r-\un_l) - \sigma'(v_\X^S) \left(\ovlur-\ovlul\right) \right]  g'_\Y  \\
			& + \left(\vn_r-\vn_l\right) \left[ \p_\xi \un_l ~b_1(\vn_l,\vtn) g_\X (1-g_{\Y}) - \p_\xi \un_r~ b_1(\vn_r,\vtn) (1-g_{\X})g_{\Y} \right] \\
			& + (s+\Y') \int_{-\infty}^{\xi} (\psi_r-\psi_l)(y+st,t) g'_{\Y}(y) dy \\
			& + \int_{-\infty}^{\xi} \left(\sigma'(\vn_r) \p_y \un_r - \sigma'(\vn_l)\p_y \un_l \right) g'_{\Y}(y) dy \\
			& - \int_{-\infty}^{\xi} \left(p(\vn_r) - p(\ovlvr) - p(\vn_l) + p(\ovlvl) \right) g'_{\Y} dy.
		\end{aligned}
	\end{equation*}
	Note that $ (\ovlur-\ovlul) g_\Y = u^S_\Y - \ovlul, $ then it follows from \cref{ode-1} that
	\begin{align*}
		D_{2,1}^- = p(v^S_\Y)- p(v^S_\X)  + \left(\sigma'(v_\X^S)-\sigma'(v_\Y^S)\right) \left(\ovlur-\ovlul\right) g'_\Y  +  \Y' \left(\ovlur-\ovlul\right) g_\Y.
	\end{align*}	
	Denote $ b_2(u,v) := \int_{0}^{1} p'(u+\theta(v-u)) d\theta. $ Then the first term of $ D_{2,2}^- $ satisfies that
	\begin{equation*}
		\begin{aligned}
			& -p(\vtn)+ p(\vn_l) + p(v^S_\X) - p(\ovlvl) = -\left[b_2(\vn_l,\vtn) \left(\vn_r-\vn_l\right) - b_2(\ovlvl,v_\X^S) \left(\ovlvr-\ovlvl\right)\right] g_\X	\\
			& \qquad = \left[\left(b_2(\vn_l,\vtn)-b_2(\ovlvl,v_\X^S)\right)\left(\vn_r-\vn_l\right) + b_2(\ovlvl,v_\X^S) \left(\vn_r-\ovlvr-\vn_l+\ovlvl\right)\right] g_\X.
		\end{aligned}
	\end{equation*}
	Note that $ b_2(\vn_l,\vtn)-b_2(\ovlvl,v_\X^S) = q_1 \left(\vn_l-\ovlvl\right) + q_2 \left(\vtn-v_\X^S\right) $ for some smooth and bounded functions $ q_1 $ and $ q_2. $
	Thus, it holds that
	\begin{equation}\label{F-3-4}
	\sum_{k=0}^{1} \int_{-\infty}^{0} \abs{\p_\xi^k(\Fn_3+\Fn_4)}^2 d\xi \leq	\sum_{k=0}^{1} \int_{-\infty}^{0} \left(\abs{\p_\xi^k D_{2,1}^-}^2 + \abs{\p_\xi^k D_{2,2}^-}^2\right) d\xi \leq C\e^2 e^{-2\alpha t}.
	\end{equation}
	Similar to \cref{F-3-4,H-1-plus}, one can prove that
	 \begin{equation*}
	 	\sum_{k=0}^{1} \int_{0}^{+\infty} \abs{\p_\xi^k(\Fn_3+\Fn_4)}^2 d\xi \leq C\e^2 e^{-2\alpha t}.
	 \end{equation*}
	Thus,
		\begin{equation*}
			\begin{aligned}
				\norm{\Hn_2}_1 \leq & ~C \norm{\Hn_1}_2 + \norm{\Fn_3 + \Fn_4}_1
				\leq C\e e^{-\alpha t},
			\end{aligned}
		\end{equation*}
	which finishes the proof of \cref{Lem-H}.


\appendix

\section{Proof of \cref{Lem-periodic}}
Since the local existence is standard, we give only the a priori estimates.
Denote $ \Torus = [0,\pi], $ and $ \norm{\cdot} = \norm{\cdot}_{L^2(\Torus)}, \norm{\cdot}_k = \norm{\cdot}_{H^k(\Torus)}. $
Let $ \phi:= v-\ovlv, \psi:= u-\ovlu, $ then it follows from \cref{N-S} that
\begin{equation}\label{app-equ}
	\begin{cases}
		\p_t \phi = \p_x \psi, \\
		\p_t \psi + \p_x p(v) = \p_x \left( \sigma'(v) \p_x \psi \right).
	\end{cases}
\end{equation}
Define the a priori assumption that $ \norm{\phi,\psi}_k(t) \leq \delta $ for some $ 0<\delta \ll 1, $ where $ k\geq 2. $
Thus, Sobolev inequality yields that 
$$ \sum_{l=0}^{k-1} \norm{\p_x^l\left(\phi,\psi\right)}_{L^\infty(\Torus)} \leq C \delta. $$ 
Multiplying $ \psi $ on \cref{app-equ}$ _2 $ yields that
\begin{equation*}
\begin{aligned}
\p_t \Big(\frac{\psi^2}{2}\Big) + \sigma'(v) \left(\p_x \psi\right)^2 = \p_x\left(\cdots\right) + p(v)\p_x \psi.
\end{aligned}
\end{equation*}
Note that
\begin{equation*}
\begin{aligned}
p(v)\p_x \psi = p(v) \p_t \phi & = \p_t \Big( \int_{\ovlv}^{\ovlv+\phi} p(s)ds- p(\ovlv) \phi \Big) + p(\ovlv) \p_t \phi \\
& = \p_t \Big( \int_{\ovlv}^{\ovlv+\phi} p(s)ds- p(\ovlv) \phi \Big) + \p_x \left( p(\ovlv) u\right),
\end{aligned}
\end{equation*}
which implies that
\begin{equation}\label{app-ineq-1}
\frac{d}{dt} \int_0^\pi \Big(\frac{\psi^2}{2} - \int_{\ovlv}^{\ovlv+\phi} p(s)ds + p(\ovlv) \phi \Big) dx + a_1 \norm{\p_x \psi}^2 \leq 0.
\end{equation}
for some constant $ a_1>0. $
From \cref{app-equ} one has that
\begin{equation}\label{app-phi}
\p_t \psi + \p_x p(v) = \p_x \left( \sigma'(v) \p_t v\right) = \p_t \left( \sigma'(v) \p_x \phi \right).
\end{equation}
Then multiplying \cref{app-phi} by $ \sigma'(v) \p_x \phi $ yields that
\begin{equation*}
\begin{aligned}
& \frac{1}{2}\p_t \left( \sigma'(v) \p_x \phi\right)^2 + \sigma'(v) \abs{p'(v)} \left(\p_x\phi\right)^2 = \sigma'(v) \p_t \psi \p_x\phi \\
&\quad = \p_t\left( \sigma'(v) \psi \p_x \phi \right) - \psi \p_t \left(\sigma'(v) \p_x \phi\right) \\
&\quad =  \p_t\left( \sigma'(v) \psi \p_x \phi \right) - \psi \p_x \left(\sigma'(v) \p_t \phi\right)\\
&\quad = \p_t\left( \sigma'(v) \psi \p_x \phi \right) - \psi \p_x \left(\sigma'(v) \p_x \psi\right)\\
&\quad = \p_t\left( \sigma'(v) \psi \p_x \phi \right) - \p_x \left(\psi \sigma'(v) \p_x \psi\right) + \sigma'(v) \left(\p_x \psi\right)^2,
\end{aligned}
\end{equation*}
which implies that
\begin{equation}\label{app-ineq-2}
\begin{aligned}
\frac{d}{dt} \int_0^\pi  \Big[\frac{1}{2}\left(\sigma'(v) \p_x \phi\right)^2 - \sigma'(v) \p_x \phi \psi \Big] dx + a_2 \norm{\p_x\phi}^2 
\leq C_1 \norm{\p_x \psi}^2,
\end{aligned}
\end{equation}
for some constants $ a_2>0 $ and $ C_1>0. $ Differentiating \cref{app-equ}$ _2 $ with respect to $ x $ and multiplying the resulting by $ \p_x \psi $ yield that
\begin{equation*}
\begin{aligned}
\frac{1}{2} \p_t \left(\p_x \psi\right)^2 + \sigma'(v) \left(\p_x^2 \psi\right)^2 = \p_x\left(\cdots\right) + \p_x p(v) \p_x^2 \psi - \p_x^2 \psi \sigma''(v) \p_x\phi \p_x \psi,
\end{aligned}
\end{equation*}
which implies that
\begin{equation}\label{app-ineq-3}
\begin{aligned}
\frac{d}{dt} \int_0^\pi \left(\p_x \psi\right)^2 dx + a_3 \int_0^\pi \left(\p_x^2 \psi\right)^2 dx & \leq C \left(\norm{\p_x \phi}^2 + \norm{\p_x \phi}_{L^\infty(\R)}^2 \norm{\p_x \psi}^2 \right)\\
& \leq C_2 \left(\norm{\p_x \phi}^2 + \delta^2 \norm{\p_x \psi}^2 \right),
\end{aligned}
\end{equation}
for some constants $ a_3>0 $ and $ C_2>0. $ Choose large constant $ M_2>> M_1>0, $ then $ M_2 \cdot $ \cref{app-ineq-1} $ + M_1 \cdot $ \cref{app-ineq-2} $ + $ \cref{app-ineq-3} gives that
\begin{align}
& \frac{d}{dt} \int_{0}^{\pi} \Big[ \frac{ M_2}{2}\psi^2 + M_2 \Big(-\int_{\ovlv}^{\ovlv+\phi} p(s)ds + p(\ovlv) \phi\Big)+ \frac{ M_1}{2}\left(\sigma'(v) \p_x \phi\right)^2 -  M_1 \sigma'(v) \p_x \phi \psi + \left(\p_x \psi\right)^2 \Big] dx \notag \\
& \qquad + \frac{a_1 M_2}{2} \norm{\p_x \psi}^2 + \frac{a_2 M_1}{2} \norm{\p_x \phi}^2 + a_3 \norm{\p_x^2 \psi}^2 \leq 0. \label{app-ineq-4}
\end{align}
Note that
\begin{equation*}
\abs{ M_1 \sigma'(v) \p_x \phi \psi} \leq \frac{M_1}{4} \left(\sigma'(v) \p_x \phi\right)^2 +  M_1 \psi^2,
\end{equation*}
thus it follows from \cref{app-ineq-4} and Poincar\'{e} inequality that for some $ \alpha>0, $ one has that
\begin{equation*}
\norm{\phi,\psi}_1(t) \leq C \norm{\phi_0,\psi_0}_1 e^{-\alpha t} \leq C\e e^{-\alpha t}.
\end{equation*}
The estimate of the higher order derivatives $ \p_x^l(\phi,\psi) $ with $ l=2,\cdots,k, $ is similar and thus omitted.


\bibliographystyle{amsplain}

\vspace{1.5cm}

\end{document}